\renewcommand{\algorithmcfname}{ALGORITHM}
\newtheorem{theorem}{Theorem}
\newtheorem{corollary}{Corollary}
\begin{document}

\title[A Dense Initialization for Limited-Memory Quasi-Newton Methods]
{A Dense Initialization for Limited-Memory Quasi-Newton Methods}


\author[J. Brust]{Johannes Brust}
\email{jbrust@ucmerced.edu}
\address{Applied Mathematics, University of California, Merced, Merced, CA 95343}

\author[O. Burdakov]{Oleg Burdakov}
\email{oleg.burdakov@liu.se}
\address{Department of Mathematics, Link\"{o}ping University, SE-581 83 Link\"{o}ping, Sweden}

\author[J. Erway]{Jennifer B. Erway}
\email{erwayjb@wfu.edu}
\address{Department of Mathematics, Wake Forest University, Winston-Salem, NC 27109}

\author[R. Marcia]{Roummel F. Marcia}
\email{rmarcia@ucmerced.edu}
\address{Applied Mathematics, University of California, Merced, Merced, CA 95343}

\thanks{J.~B. Erway is supported in part by National Science Foundation grants
CMMI-1334042 and IIS-1741264}
\thanks{R.~F. Marcia is supported in part by National Science Foundation grants
CMMI-1333326 and IIS-1741490}

\date{\today}

\keywords{Large-scale nonlinear optimization, limited-memory quasi-Newton methods, trust-region methods, quasi-Newton matrices, shape-changing norm}

\begin{abstract}
  We consider a family of dense initializations for
  limited-memory quasi-Newton methods.  
  The proposed initialization 
exploits an eigendecompo\-sition-based separation of the full space
into two complementary subspaces, 
assigning a different initialization parameter to
each subspace. This family of dense
  initializations is proposed in the context of 
  a limited-memory Broyden-Fletcher-Goldfarb-Shanno ({L-BFGS}) 
  trust-region method that makes use of a shape-changing norm to
  define each subproblem. 
As with {L-BFGS} methods that traditionally
  use diagonal initialization, the dense initialization and the
  sequence of generated quasi-Newton matrices are never explicitly
  formed.  Numerical experiments on the CUTEst test set suggest that
  this initialization together with the shape-changing trust-region
  method outperforms other L-BFGS
  methods for solving general nonconvex unconstrained optimization
  problems.  While this dense initialization is proposed in the
  context of a special trust-region method, it has broad applications
  for more general quasi-Newton trust-region and line search methods.
  In fact, this initialization is suitable for use with any
  quasi-Newton update that admits a compact representation and, in
  particular, any member of the Broyden class of updates.
\end{abstract}

\maketitle

\newcommand{\mgap}{\;\;}
\newcommand{\bgap}{\;\;\;}
\newcommand{\qDef}{{\mathcal Q}}
\newcommand{\defined}{\mathop{\,{\scriptstyle\stackrel{\triangle}{=}}}\,}
\newcommand{\diag}{\text{diag}}
\renewcommand{\algorithmcfname}{ALGORITHM}

\makeatletter
\newcommand{\minimize}[1]{{\displaystyle\minim_{#1}}}
\newcommand{\minim}{\mathop{\operator@font{minimize}}}
\newcommand{\subject}{\mathop{\operator@font{subject\ to}}}  
\newcommand{\words}[1]{\mgap\text{#1}\mgap}
\def\BFGS{{\small BFGS}}
\def\LBFGS{{\small L-BFGS}}
\def\LSR{{\small L-SR1}}
\def\SR{{\small SR1}}
\def\CG{{\small CG}}
\def\DFP{{\small DFP}}
\def\OBS{{\small OBS}}
\def\OBSSC{{\small OBS-SC}}
\def\SCSR1{{\small SC-SR1}}
\def\PSB{{\small PSB}}
\def\QR{{\small QR}}
\def\MATLAB{{\small MATLAB}}
\renewcommand{\algorithmcfname}{ALGORITHM}
\renewcommand{\vec}[1]{#1}

\makeatother

\pagestyle{myheadings}
\thispagestyle{plain}
\markboth{J. BRUST, O. BURDAKOV, J. B. ERWAY AND R. F. MARCIA}{Dense initializations for limited-memory quasi-Newton methods}

\section{Introduction}
\label{sec:intro}
\label{sec-intro}
In this paper we propose a new
dense initialization for quasi-Newton methods 
to solve problems of the form
\begin{equation*}
	\underset{ \vec{x} \in \Re^n }{\text{ minimize }} f(\vec{x}),
\end{equation*}
where $ f:\Re^n\rightarrow\Re $ is 
at least a continuously differentiable function, which is not necessarily convex.
The dense
initialization matrix is designed to be updated each time a new
quasi-Newton pair is computed (i.e., as often as once an iteration);
however, in order to retain the efficiency of limited-memory
quasi-Newton methods, the dense initialization matrix and the
generated sequence of quasi-Newton matrices are not explicitly formed.
This proposed initialization makes use of a partial eigendecomposition 
of these matrices for separating  
$\Re^n$ into two orthogonal subspaces -- one for which
there is approximate curvature information and the other for which
there is no reliable curvature information. 
This initialization has
broad applications for general quasi-Newton trust-region and line
search methods.  In fact, this work can be applied to any quasi-Newton
method that uses an update with a compact representation,
which includes any member of the Broyden class of updates. For this
paper, we explore its use in one specific algorithm; in particular
we consider a limited-memory Broyden-Fletcher-Goldfarb-Shanno ({\small
L-BFGS}) trust-region method where each subproblem is defined using a
shape-changing norm~\cite{BurdakovLMTR16}.  The reason for this choice
is that the dense initialization is naturally well-suited for solving
{\small L-BFGS} trust-region subproblems defined by this norm.
Numerical results on the {\small CUTE}st test set suggest that the dense initialization outperforms
other \LBFGS{} methods.

\medskip 

The {\small BFGS} update is the most widely-used quasi-Newton update for large-scale optimization;
it is defined by the recursion formula 
\begin{equation}
	\label{eq:recursion}
  \vec{B}_{k+1} = \vec{B}_{k} -
\frac{1 }{\vec{s}^T_{k}\vec{B}_{k}\vec{s}_{k}}\vec{B}_{k}\vec{s}_{k}
\vec{s}_{k}^T\vec{B}_{k}+
\frac{1}{\vec{s}^T_{k}\vec{y}_{k}} \vec{y}_{k}\vec{y}^T_{k},
\end{equation}
where
\begin{equation}\label{eqn-sy}
	\vec{s}_{k} \defined \vec{x}_{k+1} - \vec{x}_{k} \quad \text{ and } \quad \vec{y}_{k} \defined \nabla f(\vec{x}_{k+1}) - \nabla f(\vec{x}_{k}),
\end{equation}
and $ \vec{B}_0 \in \Re^{n \times n} $ is a suitably-chosen
initial matrix.
This rank-two update to $B_{k}$ preserves positive definiteness when $
\vec{s}^T_{k}\vec{y}_{k} > 0$.

{\small L-BFGS} is a limited-memory variant of {\small BFGS} that only
stores a predetermined number, $m$, of the most recently-computed pairs $\{s_i,y_i\}$
where $m\ll n$. (Typically, $m\in[3,7]$ (see, e.g.,~\cite{ByrNS94}).)  Together with an intial matrix $B_0$ that depends on $k$, 
these pairs are used to compute $B_k$.  For notational simplicity, we drop the
dependence of the initial matrix on $k$ and simply denote it as $B_0$.
 This limitation on the number of stored pairs
allows for a practical implementation of the {\small BFGS} method
for large-scale optimization.

There are several desirable properties for picking the initial matrix
$B_0$.  First, in order for the sequence $\{B_k\}$ generated by
(\ref{eq:recursion}) to be symmetric and positive definite, it is necessary
that $ \vec{B}_0 $ is symmetric and positive definite. Second, it is
desirable for $B_0$ to be easily invertible so that solving linear systems with any matrix
in the sequence is computable using the so-called ``two-loop
recursion''~\cite{ByrNS94} or other recursive formulas for $B_k^{-1}$ (for
an overview of other available methods see~\cite{ErwayMarcia17LAA}).  For
these reasons, $\vec{B}_0$ is often chosen to be a scalar multiple of the
identity matrix, i.e., 
\begin{equation}\label{eqn-diagInit}
	\vec{B}_0 =  \gamma_k \vec{I}, \quad
\text{with}\quad \gamma_k > 0. 
\end{equation}
For {\small BFGS} matrices, the conventional choice for the initialization parameter $ \gamma_k $ is 
\begin{equation}\label{eqn-B0-usual} \gamma_k = \frac{\vec{y}^T_{k}
  \vec{y}_{k} }{\vec{s}^T_{k} \vec{y}_{k}},
\end{equation} which can be viewed as a spectral
estimate for $\nabla^2 f( \vec{x}_k )$~\cite{NocW99}.
(This choice was originally proposed in~\cite{ShannoPh78} using a derivation
based on
optimal conditioning.)
It is worth noting that this choice of $ \gamma_k $ can also be derived as the minimizer
of the scalar minimization problem 
\begin{equation}\label{eqn-mingamma}
 \gamma_k = \underset{ \gamma }{ \text{
   argmin } } \left\| \vec{B}^{-1}_0 \vec{y}_k - \vec{s}_k \right\|^2_2,
\end{equation}
where $ \vec{B}^{-1}_0 = \gamma^{-1}\vec{I} $.
For numerical studies on this choice of initialization,
see, e.g., the references listed within~\cite{BurWX96}.

\bigskip

In this paper, we consider a specific dense initialization in lieu of
the usual diagonal initialization.  The aforementioned separation of
$\Re^n$ into two orthogonal subspaces allows for different initialization parameters
to be used to estimate the curvature of the underlying 
function in
these subspaces. In one space 
(the space spanned by the most recent updates $\{s_i, y_i\}$ with $k-m \le i \le k-1$), 
estimates of the curvature of the
underlying function are available, and thus, one initialization parameter can be set
using this information.  However, in its orthogonal complement,
curvature information is not available.  
Therefore, if the component of the trial step in the orthogonal subspace is (relatively) too large,
the predictive quality of the whole trial step  is expected to deteriorate. As a result, the
trust-region radius might be reduced, despite the fact that the predictive quality of the component in the aforementioned small subspace may be sufficiently good.
Separating the whole space  into these two subspaces allows users to treat each subspace
differently.
An alternative view of this initialization is that it
makes use of \emph{two} spectral estimates of $\nabla^2 f(x_k)$.
Finally, the proposed initialization also allows for efficiently
solving and computing products with the resulting quasi-Newton
matrices.

\medskip

The paper is organized in five sections.  In Section 2, we review
properties of {\small L-BFGS} matrices arising from their special recursive
structure as well as overview the shape-changing trust-region method to be
used in this paper.  In Section 3, we present the proposed trust-region
method that uses a shape-changing norm together with a dense initialization
matrix.  While this dense initialization is presented in one specific
context, it can be used in combination with any quasi-Newton update that
admits a so-called \emph{compact representation}.  Numerical experiments
comparing this method with other combinations of initializations and
\LBFGS{} methods are reported in Section 4, and concluding remarks  are
found in Section 5.

\section{Background}
\label{sec:background}
In this section, we overview the compact formulation for \LBFGS{} matrices
and how to efficiently compute a partial eigendecomposition.  Finally, we
review the shape-changing trust-region method considered in this paper.

\subsection{The compact representation}  
The special structure of the recursion formula for \LBFGS{} matrices
admits a so-called compact representation~\cite{ByrNS94},
which is overviewed in this section.

Using the $m$ most recently computed pairs $\{s_j\}$ and $\{y_j\}$ given
in (\ref{eqn-sy}), we define the following matrices
\begin{equation*}
  \vec{S}_k \defined \left[ \vec{s}_{k-m} \,\, \cdots \,\, \vec{s}_{k-1}\right] \quad \text{ and } \quad \vec{Y}_k \defined \left[ \vec{y}_{k-m} \,\, \cdots \,\, \vec{y}_{k-1}\right].
\end{equation*}
With $\vec{L}_k$ taken to be the strictly lower triangular part of the matrix
of $ \vec{S}^T_k \vec{Y}_k
$,
and $ \vec{D}_k $ defined as the diagonal of $ \vec{S}^T_k \vec{Y}_k
$,
the compact representation of an \LBFGS{} matrix is
\begin{equation}
	\label{eq:comactlbfgs}
	\vec{B}_k = \vec{B}_0 + \vec{\Psi}_k \vec{M}_k \vec{\Psi}^T_k,
\end{equation}
where 
\begin{equation}\label{eqn-PsiM}
\Psi_k\defined
 \left[ \vec{B}_0\vec{S}_k \,\, \vec{Y}_k\right] \quad
\text{and} \quad
M_k\defined 
	-\left[ 
\begin{array}{c c}
\vec{S}^T_k\vec{B}_0 	\vec{S}_k 			& \vec{L}_k \\
	\vec{L}^T_k 								& -\vec{D}_k
\end{array}
\right]^{-1}
\end{equation}
(see~\cite{ByrNS94} for details).  Note that $ \vec{\Psi}_k \in \Re^{n
  \times 2m} $, and $ \vec{M}_k \in \Re^{2m \times 2m} $ is invertible
provided $s_i^Ty_i > 0$ for all $i$~\cite[Theorem 2.3]{ByrNS94}.  An
advantage of the compact representation is that if $ \vec{B}_0 $ is
chosen to be a multiple of the identity, then computing products
with $B_k$ or solving linear systems with $B_k$ can be done efficiently~\cite{ErwayMarcia17LAA,LukV13}.

It should be noted that {\small L-BFGS} matrices are just one member of
the Broyden class of matrices (see, e.g.,~\cite{NocW99}),
and in fact every member of the Broyden class of matrices admits a
compact representation~{\cite{DeGuchyEM16,ErwayM15,LukV13}.

\subsection{Partial eigendecomposition of $ \vec{B}_k $}
\label{subsec:eigen}
If $ \vec{B}_0$ is taken to be a multiple of the identity matrix, then the partial
eigendecomposition of $B_k$ can be computed efficiently from the compact
representation \eqref{eq:comactlbfgs} using either a partial {\small QR}
decomposition~\cite{BurdakovLMTR16} or a partial singular
value decomposition ({\small SVD})~\cite{Lu92}.  Below, we review the
approach that uses the {\small QR} decomposition, and we assume
that $ \vec{\Psi}_k $ has rank $ r = 2m $.  (For the rank-deficient case,
see the techniques found in \cite{BurdakovLMTR16}.)

Let
\begin{equation*}
	\vec{\Psi}_k = \vec{Q}\vec{R}, 
\end{equation*}
be the so-called ``thin'' {\small QR} factorization of $\Psi_k$, where
$\vec{Q} \in \Re^{n\times r} $ and $ \vec{R} \in \Re^{r\times
  r} $.  Since the matrix $ \vec{R} \vec{M}_k \vec{R}^T$  is a small $(r\times r)$
  matrix with $r\ll n$ (recall that $r=2m$, where $m$ is typically between 3 and 7), it is
computationally feasible to calculate its eigendecomposition; thus, suppose
$\vec{W} \hat{\vec{\Lambda}} \vec{W}^T $ is the eigendecomposition of $
\vec{R} \vec{M}_k \vec{R}^T.$ Then,
\begin{equation*}
	\vec{\Psi}_k\vec{M}_k\vec{\Psi}^T_k = \vec{Q} \vec{R} \vec{M}_k\vec{R}^T \vec{Q}^T= \vec{Q} \vec{W} \hat{\vec{\Lambda}}\vec{W}^T \vec{Q}^T = 
									\vec{\Psi}_k \vec{R}^{-1} \vec{W} \hat{\vec{\Lambda}}\vec{W}^T  \vec{R}^{-T} \vec{\Psi}^{T}_k.
\end{equation*}
Defining
\begin{equation}\label{eqn-pparallel}
\vec{P}_{\parallel} = \vec{\Psi}_k \vec{R}^{-1} \vec{W},
\end{equation}
gives that
\begin{equation}\label{eqn-lambdahat}
	\vec{\Psi}_k \vec{M}_k \vec{\Psi}^T_k = \vec{P}_\parallel \hat{\vec{\Lambda}} \vec{P}^T_\parallel.
\end{equation}
Thus, for $B_0=\gamma_kI$, the eigendecomposition of $B_k$ can be written as
\begin{equation}
	\label{eq:eiglbfgs}
	\vec{B}_k = \gamma_k \vec{I} + \vec{\Psi}_k \vec{M}_k \vec{\Psi}^T_k = P\Lambda P^T,
\end{equation}
where 
\begin{equation} \label{eqn-P}
  P \defined
  \left[ \vec{P}_{\parallel} \,\, \vec{P}_{\perp}\right], \quad
  \Lambda \defined
\left[ \begin{array}{cc}
    \hat{\vec{\Lambda}}+\gamma_k I_r 	& 			\\
    & \gamma_k I_{n-r}
  \end{array}
\right],
\end{equation}
and $ \vec{P}_{\perp} \in
\mathbb{R}^{n \times (n-r)} $ is defined as the orthogonal complement of $
\vec{P}_{\parallel} $, i.e., $ \vec{P}^T_{\perp}\vec{P}_{\perp} =
\vec{I}_{n-r} $ and $ \vec{P}^T_{\parallel}\vec{P}_{\perp} = \vec{0}_{r
  \times (n-r)}$ .  
Hence, $\vec{B}_k$ has $r$ eigenvalues given by the diagonal
elements of $ \hat{ \vec{\Lambda} } +\gamma_k \vec{I}_{r} $ and
the remaining eigenvalues are $ \gamma_k $ with multiplicity $ n-r $.

\subsubsection{Practical computations}
Using the above method yields the eigenvalues of $B_k$ as well as the
ability to compute products with $P_\parallel$. 
Formula (\ref{eqn-pparallel}) indicates that $Q$ is not
required to be explicitly formed in order to compute products with
$P_\parallel$.
For this reason, it is desirable to avoid
forming $Q$ by computing only $R$ via the Cholesky factorization of
$ \vec{\Psi}^T_k\vec{\Psi}_k$,
i.e., $ \vec{\Psi}^T_k\vec{\Psi}_k = \vec{R}^T\vec{R} $
(see~\cite{BurdakovLMTR16}).

At an additional expense,
the eigenvectors stored in the columns of $P_\parallel$ may be formed and
stored.  For the shape-changing trust-region method used in this paper, it
is not required to store $P_\parallel$.
In contrast, the matrix $P_\perp$ is
prohibitively expensive to form.  It turns out that for this work it is
only necessary to be able to compute projections into the subspace $
\vec{P}_{\perp} \vec{P}_{\perp}^T $, which can be done using the
identity
\begin{equation} \label{eqn-projection}
\vec{P}_{\perp}\vec{P}^T_{\perp} = \vec{I}-
\vec{P}_{\parallel}\vec{P}^T_{\parallel}.
\end{equation}

\subsection{A shape-changing L-BFGS trust-region method}
Generally speaking,
at
 the $k$th step of a trust-region method, a search direction is computed by
approximately solving the trust-region subproblem
\begin{equation}
	\label{eq:subproblem}
		\vec{p}^* = \underset{\left\| \vec{p} \right\| \le {\Delta_k}}{\text{ argmin }} Q(\vec{p}) \defined\vec{g}^T_k \vec{p} + \frac{1}{2} \vec{p}^T \vec{B}_k \vec{p}, 
\end{equation}
where $ \vec{g}_k \defined \nabla f(\vec{x}_k) $, $ \vec{B}_k \approx
\nabla^2 f(\vec{x}_k) $, and $ \Delta_k > 0 $ is the trust-region radius.
When second derivatives are unavailable or computationally too expensive to
compute, approximations using gradient information may be preferred.  Not
only do quasi-Newton matrices use only gradient and function information,
but in the large-scale case, these Hessian approximations are never stored;
instead, a recursive formula or methods that avoid explicitly forming $B_k$
may be used to compute matrix-vector products with the approximate Hessians
or their inverses~\cite{ByrNS94,ErwayM15,ErwayMarcia17LAA,LukV13}.
 
\medskip

Consider the trust-region subproblem defined by the shape-changing infinity norm:
\begin{equation}
	\label{eq:subprobsc}
	\underset{\left\| \vec{p} \right\|_{\vec{P},\infty} \le {\Delta_k}}{\text{ minimize }} Q(\vec{p}) = \vec{g}^T_k \vec{p} + \frac{1}{2} \vec{p}^T \vec{B}_k \vec{p},
\end{equation}
where 
\begin{equation}\label{eq:shape-changing_norm}
	\left\| \vec{p} \right\|_{\vec{P},\infty} \defined \text{max}\left( \| \vec{P}^T_{\parallel} \vec{p} \|_{\infty}, \| \vec{P}^T_{\perp} \vec{p} \|_{2} \right)
\end{equation}
and $ \vec{P}_{\parallel} $ and $ \vec{P}_{\perp} $ are given in
(\ref{eqn-P}).  Note that the ratio $\|p\|_2/\|p\|_{\vec{P},\infty}$
  does not depend on $n$ and only moderately depends on $r$. (In particular, $1 \le \|p\|_2/\|p\|_{\vec{P},\infty} \le \sqrt{r+1}$.)
  Because this norm depends on the
eigenvectors of $ \vec{B}_k $, the shape of the trust region changes each
time the quasi-Newton matrix is updated, which is possibly every iteration
of a trust-region method.  (See~\cite{BurdakovLMTR16} for more details and
other properties of this norm.)  The motivation for this choice of norm is
that the the trust-region subproblem (\ref{eq:subprobsc}) decouples into
two separate problems for which closed-form solutions exist.

We now review the closed-form solution to (\ref{eq:subprobsc}), as detailed 
in~\cite{BurdakovLMTR16}.  Let 
\begin{equation}\label{eqn-varchange}
  \vec{v} = \vec{P}^T\vec{p} = 	\left[ 
										\begin{array}{c}
                                                                                  \vec{P}^T_{\parallel}\vec{p} \\
                                                                                  \vec{P}^T_{\perp}\vec{p} 
										\end{array}
                                                                              \right]
                                                                              \defined 	\left[ 
										\begin{array}{c}
											\vec{v}_{\parallel} \\
											\vec{v}_{\perp} 
										\end{array}
									\right] \quad \text{ and } \quad
				\vec{P}^T\vec{g}_k = 	\left[ 
										\begin{array}{c}
											\vec{P}^T_{\parallel}\vec{g}_k \\
											\vec{P}^T_{\perp}\vec{g}_k 
										\end{array}
									\right]
								\defined 	\left[ 
										\begin{array}{c}
											\vec{g}_{\parallel} \\
											\vec{g}_{\perp} 
										\end{array}
									\right].
\end{equation}
With this change of variables, the objective function of (\ref{eq:subprobsc})
becomes
\begin{align*}
	Q\left( \vec{P}\vec{v} \right) &= \vec{g}^T_k \vec{P}\vec{v} + \frac{1}{2} \vec{v}^T \left(  \hat{\vec{\Lambda}}+\gamma_k \vec{I}_n   \right) \vec{v}  \\
							&= \vec{g}^T_{\parallel} \vec{v}_{\parallel} + \vec{g}^T_{\perp} \vec{v}_{\perp} +
									\frac{1}{2}\left( \vec{v}^T_{\parallel} \left( \hat{\vec{\Lambda}} + \gamma_k \vec{I}_r  \right) \vec{v}_{\parallel} + 
										\gamma_k\left\| \vec{v}_{\perp} \right\|^2_2  \right) \\
							&= \vec{g}^T_{\parallel} \vec{v}_{\parallel}+\frac{1}{2} \vec{v}^T_{\parallel} \left( 
\hat{\vec{\Lambda}} + \gamma_k \vec{I}_r\right) \vec{v}_{\parallel} + \vec{g}^T_{\perp} \vec{v}_{\perp} + \frac{1}{2}\gamma_k\left\| \vec{v}_{\perp} \right\|^2_2.
\end{align*}
The trust-region constraint 
$\left\| \vec{p} \right\|_{\vec{P},\infty} \le \Delta_k $ implies $ \left\|
  \vec{v}_{\parallel} \right\|_{\infty} \le \Delta_k $ and $ \left\|
  \vec{v}_{\perp} \right\|_{2} \le \Delta_k$, which decouples
\eqref{eq:subprobsc} into the following two trust-region subproblems:
\begin{eqnarray}
  \underset{ \left\| \vec{v}_{\parallel} \right\|_{\infty} \le \Delta_k }{\text{ minimize }} q_{\parallel}\left(\vec{v}_{\parallel}\right) & \defined & \vec{g}^T_{\parallel} \vec{v}_{\parallel} + \frac{1}{2} \vec{v}^T_{\parallel} \left(  \hat{\vec{\Lambda}} + \gamma_k \vec{I}_r \right) \vec{v}_{\parallel} \label{eqn-sub1} \\
	\underset{ \left\| \vec{v}_{\perp} \right\|_{2} \le \Delta_k }{\text{ minimize }} q_{\perp}\left(\vec{v}_{\perp}\right) &\defined& \vec{g}^T_{\perp} \vec{v}_{\perp} + 
	\frac{1}{2}\gamma_k\left\| \vec{v}_{\perp} \right\|^2_2. \label{eqn-sub2}
\end{eqnarray}
Observe that the resulting minimization problems are considerably simpler
than the original problem since in both cases the Hessian of the new
objective functions are diagonal matrices.  The solutions to
these decoupled problems have closed-form analytical solutions~\cite{BurdakovLMTR16,BruEM16}. Specifically, letting 
$ \lambda_i \defined \hat{\lambda}_{i} +
\gamma_k$, the solution to (\ref{eqn-sub1}) is given coordinate-wise by 
\begin{equation}\label{eqn:solution_vparallel}
		[\vec{v}^*_{||} ]_i =
		\begin{cases}
		-\frac{\left[ \vec{g}_{||}\right]_i}{ \lambda_i} 
			& \text{ if } \left| \frac{ \left[ \vec{g}_{||}\right]_i  }{\lambda_i} \right| \le \Delta_k \text{ and }  \lambda_i > 0, \\
		 c & \text{ if } \left[ \vec{g}_{\parallel}\right]_i = 0 \text{ and }  \lambda_i = 0,\\
		- \text{sgn}(\left [ \vec{g}_{\parallel} \right ]_i) \Delta_k											
		& \text{ if } \left[ \vec{g}_{\parallel}\right]_i \ne 0 \text{ and }  \lambda_i = 0,\\				
		\pm\Delta_k 											
		& \text{ if } \left[ \vec{g}_{\parallel}\right]_i = 0 \text{ and }  \lambda_i < 0,\\
		-\frac{\Delta_k}{ \left| \left[ \vec{g}_{||}\right]_i \right|} \left[ \vec{g}_{||}\right]_i 
		& \text{ otherwise},
		\end{cases}, 
\end{equation}
where $c$ is any real number in $[-\Delta_k, \Delta_k]$ and `sgn' denotes the
signum function. Meanwhile, the minimizer of (\ref{eqn-sub2})
is given by
\begin{equation}
\label{eq:subsolnperp}
\vec{v}^*_{\perp} = \beta \vec{g}_{\perp},
\end{equation}
where
\begin{equation}
\label{eq:subsolnbeta}
\beta =
\begin{cases}			
	-\frac{1}{\gamma_k} 							& \text{ if } \gamma_k > 0 \text{ and }  \left \| \vec{g}_{\perp} \right \|_2 \le \Delta_k |\gamma_k|, \\
-\frac{ \Delta_k}{\| \vec{g}_{\perp} \|_2}     & \text{ otherwise. }			
\end{cases}
\end{equation}
Note that the solution to (\ref{eq:subprobsc}) is then
\begin{equation}\label{eqn-finalsolution}
  \vec{p}^* = \vec{P}\vec{v}^* = \vec{P}_{\parallel}\vec{v}^*_{\parallel} + \vec{P}_{\perp}\vec{v}^*_{\perp} = 
\vec{P}_{\parallel}\vec{v}^*_{\parallel} + \beta\vec{P}_{\perp}g_{\perp} =
\vec{P}_{\parallel}\vec{v}^*_{\parallel} + \beta\vec{P}_{\perp}\vec{P}_{\perp}^Tg_{k},
\end{equation}
where the latter term is computed using (\ref{eqn-projection}).
Additional simplifications yield the following expression for $p^*$:
\begin{equation}\label{eq:pstar}
	p^* = \beta g + P_{\parallel}(v_{\parallel}^* - \beta g_{\parallel}).
\end{equation}	
The overall cost of computing the solution to  \eqref{eq:subprobsc} is comparable to that of using the Euclidean norm
(see~\cite{BurdakovLMTR16}).  The main advantage of using the shape-changing norm \eqref{eq:shape-changing_norm}
is that the solution $p^*$ in \eqref{eq:pstar} has a closed-form expression.

\section{The Proposed Method}
\label{sec:method}
In this section, we present a new dense initialization and demonstrate
how it is naturally well-suited for trust-region methods defined by
the shape-changing infinity norm. Finally, we present a full
trust-region algorithm that uses the dense initialization, consider
its computational cost, and prove global convergence.

\subsection{Dense initial matrix $ \widehat{\vec{B}}_0 $}
\label{subsec:denseinitial}
In this section, we propose a new dense initialization for
quasi-Newton methods.  Importantly, in order to retain the efficiency
of quasi-Newton methods the dense initialization matrix and
subsequently updated quasi-Newton matrices are never explicitly
formed.  This initialization can be used with any quasi-Newton update
for which there is a compact representation; however, for simplicity,
we continue to refer to the {\small BFGS} update throughout this
section.  For notational purposes, we use the initial matrix $B_0$ to
represent the usual initialization
and $\widehat{\vec{B}}_0$ to denote the proposed dense initialization.
Similarly, $\{B_k\}$ and $\{\widehat{\vec{B}}_k\}$ will be used to
denote the sequences of matrices obtained using the
initializations $B_0$ and $\widehat{\vec{B}}_0$, respectively.

Our goal in choosing an alternative initialization is
four-fold:
(i) to be able to treat subspaces differently depending on whether
curvature information is available or not,
(ii) to preserve properties of symmetry and positive-definiteness,
(iii) to be able to efficiently compute products with the resulting quasi-Newton matrices,
and
(iv) to be able to efficiently solve linear systems 
involving the resulting quasi-Newton matrices.  The initialization proposed
in this paper leans upon two different parameter choices that can be viewed
as an 
estimate of the curvature of $\nabla^2 f(x_k)$ in two subspaces:
one spanned by the columns
of $P_\parallel$ and another 
 spanned by the columns of $P_\perp$.

The usual initialization for a {\small BFGS} matrix $B_k$ is $B_0=\gamma_k I$,
where $\gamma_k>0$.
Note that this initialization is equivalent to
$$	\vec{B}_0 = \gamma_kPP^T=\gamma_k\vec{P}_{\parallel} \vec{P}^T_{\parallel} + \gamma_{k} \vec{P}_{\perp} \vec{P}^T_{\perp}.$$
In contrast, for a given $\gamma_k, \gamma^\perp_k \in \Re$, consider the
following symmetric, and in general, dense initialization matrix:
\begin{equation}\label{eq:denseB0}
	\widehat{\vec{B}}_0 = \gamma_k\vec{P}_{\parallel} \vec{P}^T_{\parallel} + \gamma^\perp_k \vec{P}_{\perp} \vec{P}^T_{\perp},
\end{equation} 
where $P_\parallel$ and $P_\perp$ are the matrices of eigenvectors 
defined in Section~\ref{subsec:eigen}.  We now derive the
eigendecomposition of $\widehat{B}_k$.

\begin{theorem}\label{eqn-thm1}
Let $\widehat{\vec{B}}_0$ be defined as in \eqref{eq:denseB0}.  
Then $\widehat{\vec{B}}_k$ generated using (\ref{eq:recursion})
has the eigendecomposition
\begin{equation}\label{eq:Bkhat_eig}
	\widehat{\vec{B}}_k 
	 =
	\left[ \vec{P}_{\parallel} \,\, \vec{P}_{\perp}\right]
	\left[ 
		\begin{array}{c c}
			\hat{\vec{\Lambda}} +\gamma_k \vec{I}_r 	& 			\\
														&	\gamma^\perp_k\vec{I}_{n-r}
		\end{array}
	\right]
	\left[ \vec{P}_{\parallel} \,\,  \vec{P}_{\perp}\right]^T,
\end{equation}
where $P_\parallel, P_\perp,$ and $\hat{\Lambda}$ are given in
(\ref{eqn-pparallel}), (\ref{eqn-P}), and (\ref{eqn-lambdahat}), respectively.
\end{theorem}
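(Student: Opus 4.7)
My plan is to show that the BFGS recursion started from $\widehat{B}_0$ produces the same $\Psi$ and $M$ pieces in the compact representation \eqref{eq:comactlbfgs}--\eqref{eqn-PsiM} as the recursion started from $B_0=\gamma_k I$, so that $\widehat{B}_k$ differs from $B_k$ only through the initial matrix itself. The claimed eigendecomposition will then fall out by expanding $\widehat{B}_0$ in the $\{P_\parallel, P_\perp\}$ basis and collecting the contribution of the already-computed correction $\Psi_k M_k \Psi_k^T$.

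The first and central step is to prove that $P_\perp^T S_k = 0$. Since $P_\parallel = \Psi_k R^{-1} W$ with $R^{-1}W$ nonsingular, the range of $P_\parallel$ equals the column space of $\Psi_k = [\gamma_k S_k, Y_k]$, which contains the range of $S_k$; because $[P_\parallel, P_\perp]$ is orthogonal, $S_k$ must lie in the orthogonal complement of the range of $P_\perp$. Using $I = P_\parallel P_\parallel^T + P_\perp P_\perp^T$, this immediately gives
\[
\widehat{B}_0 S_k = \gamma_k P_\parallel P_\parallel^T S_k + \gamma_k^\perp P_\perp P_\perp^T S_k = \gamma_k S_k = B_0 S_k,
\]
and in particular $S_k^T \widehat{B}_0 S_k = \gamma_k S_k^T S_k = S_k^T B_0 S_k$.

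Next I would apply the compact representation \eqref{eq:comactlbfgs}--\eqref{eqn-PsiM} with initial matrix $\widehat{B}_0$. The previous step shows that the corresponding $\widehat{\Psi}_k := [\widehat{B}_0 S_k, Y_k]$ and $\widehat{M}_k$ coincide with $\Psi_k$ and $M_k$ respectively, so by \eqref{eqn-lambdahat},
\[
\widehat{B}_k = \widehat{B}_0 + \Psi_k M_k \Psi_k^T = \widehat{B}_0 + P_\parallel \hat{\Lambda} P_\parallel^T.
\]
Substituting $\widehat{B}_0 = \gamma_k P_\parallel P_\parallel^T + \gamma_k^\perp P_\perp P_\perp^T$ from \eqref{eq:denseB0} and grouping terms yields $\widehat{B}_k = P_\parallel(\hat{\Lambda}+\gamma_k I_r) P_\parallel^T + \gamma_k^\perp P_\perp P_\perp^T$, which is precisely the block-diagonal form claimed in \eqref{eq:Bkhat_eig} since $[P_\parallel, P_\perp]$ is orthogonal by construction.

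The only genuinely interesting ingredient is the observation $P_\perp^T S_k = 0$; once this is in hand, the rest is bookkeeping that inherits directly from the compact-representation calculation already done for the standard initialization, and no manipulation of the BFGS recursion \eqref{eq:recursion} itself is required.
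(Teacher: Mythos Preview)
Your proposal is correct and follows essentially the same route as the paper's proof: both hinge on the observation that $\mathrm{Range}(S_k)\subset\mathrm{Range}(\Psi_k)=\mathrm{Range}(P_\parallel)$, hence $\widehat{B}_0 S_k = B_0 S_k$, which forces the $\Psi_k$ and $M_k$ blocks in the compact representation of $\widehat{B}_k$ to coincide with those of $B_k$. The remaining substitution of $\widehat{B}_0$ and regrouping is exactly what the paper does, so there is no substantive difference in approach.
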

\begin{proof}
First note that the columns of $S_k$ are in
$\text{Range}(\Psi_k)$, where $\Psi_k$ is defined in (\ref{eqn-PsiM}).  From
(\ref{eqn-pparallel}), $\text{Range} (\vec{\Psi}_k) = \text{Range}
(\vec{P}_{\parallel})$; 
thus,
$P_\parallel P_\parallel^T S_k = S_k$ and $P_\perp^TS_k = 0$.
This gives that
\begin{equation}\label{eqn-B0Bhat}
	\widehat{\vec{B}}_0 \vec{S}_k 
	= \gamma_k \vec{P}_{\parallel}\vec{P}^T_{\parallel} \vec{S}_k 
	+ \gamma^\perp_k \vec{P}_{\perp} \vec{P}^T_{\perp}\vec{S}_k
	= \gamma_k \vec{S}_k 
	= \vec{B}_0 \vec{S}_k.
\end{equation}
Combining the compact representation of $\widehat{B}_k$ ((\ref{eq:comactlbfgs})
and (\ref{eqn-PsiM}))
together with (\ref{eqn-B0Bhat}) yields
\begin{eqnarray*}
	\widehat{\vec{B}}_k 
	&=&  
	\widehat{\vec{B}}_0 - 
	\left[ 
		\widehat{\vec{B}}_0\vec{S}_k \,\, \vec{Y}_k 
	\right] 
	\left[ 
	\begin{array}{c c}
		\vec{S}^T_k	\widehat{\vec{B}}_0 \vec{S}_k 		& \vec{L}_k \\
		\vec{L}^T_k 								& -\vec{D}_k
	\end{array}
	\right]^{-1}
	\left[ 
	\begin{array}{c}
		\vec{S}^T_k	\widehat{\vec{B}}_0	\\
		\vec{Y}^T_k
	\end{array}
	\right ] \\
	&=&  
	\widehat{\vec{B}}_0 - 
	\left[ 
		{\vec{B}}_0\vec{S}_k \,\, \vec{Y}_k 
	\right] 
	\left[ 
	\begin{array}{c c}
		\vec{S}^T_k{\vec{B}}_0 \vec{S}_k 		& \vec{L}_k \\
		\vec{L}^T_k 								& -\vec{D}_k
	\end{array}
	\right]^{-1}
	\left[ 
	\begin{array}{c}
		\vec{S}^T_k{\vec{B}}_0	\\
		\vec{Y}^T_k
	\end{array}
	\right ] \\
	&=&
	 \gamma_k\vec{P}_{\parallel} \vec{P}^T_{\parallel} + \gamma^\perp_k \vec{P}_{\perp} \vec{P}^T_{\perp} 
	 +
	 \vec{P}_{\parallel}  \hat{\vec{\Lambda}} \vec{P}^T_{\parallel}
	 \\
	 &=&
	  \vec{P}_{\parallel} \left( \hat{\Lambda} + \gamma_k \vec{I}_r \right) \vec{P}^T_{\parallel} + \gamma^\perp_k \vec{P}_{\perp}\vec{P}^T_{\perp},
\end{eqnarray*}
which is equivalent to
\eqref{eq:Bkhat_eig}. $\square$
\end{proof}

\medskip

It can be easily verified that (\ref{eq:Bkhat_eig}) holds also for
$P_\parallel$ defined in~\cite{BurdakovLMTR16} for possibly
rank-deficient $\Psi_k$.  (Note that (8) 
applies only to the special case when $\Psi_k$ is full-rank.)

\medskip

Theorem~\ref{eqn-thm1} shows that the matrix $\widehat{\vec{B}}_k$ that results from using
the initialization (\ref{eq:denseB0}) shares the same eigenvectors as
$B_k$, generated using $B_0=\gamma_k I$.  Moreover, the eigenvalues
corresponding to the eigenvectors stored in the columns of $P_\parallel$
are the same for $\widehat{B}_k$ and $B_k$.  The only difference
in the eigendecompositions of $\widehat{B}_k$ and $B_k$ is in the
eigenvalues corresponding to the eigenvectors stored in the columns of
$P_\perp$.  This is summarized in the following corollary.

\begin{corollary}\label{eqn-cor1}
Suppose $B_k$ is a {\small BFGS} matrix initialized with
  $B_0=\gamma_kI$ and $\widehat{\vec{B}}_k$ is a {\small BFGS} matrix
  initialized with (\ref{eq:denseB0}).  Then $B_k$ and
  $\widehat{\vec{B}}_k$ have the same eigenvectors; moreover, these
  matrices have $r$ eigenvalues in common given by
  $\lambda_i\defined\hat{\lambda}_i+\gamma_k$ where $\hat{\Lambda}=\diag(
  \hat{\lambda}_1, \ldots, \hat{\lambda}_r)$.  \end{corollary}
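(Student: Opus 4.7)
The plan is to read off the corollary directly by comparing two eigendecompositions that are already in hand: the one for $B_k$ from equation~\eqref{eq:eiglbfgs}--\eqref{eqn-P}, and the one for $\widehat{B}_k$ just established in Theorem~\ref{eqn-thm1} (equation~\eqref{eq:Bkhat_eig}). Both are expressed in the orthonormal basis $P = [P_\parallel \ P_\perp]$ with the same block structure, so the conclusion about shared eigenvectors and shared eigenvalues should drop out immediately.

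First, I would observe that by~\eqref{eq:eiglbfgs} and~\eqref{eqn-P}, the matrix $B_k$ admits the eigendecomposition
$$
B_k \;=\; [P_\parallel \ P_\perp]\,\mathrm{diag}\!\bigl(\hat\Lambda + \gamma_k I_r,\ \gamma_k I_{n-r}\bigr)\,[P_\parallel \ P_\perp]^T,
$$
and $P$ is orthogonal by construction of $P_\perp$ as the orthogonal complement of $P_\parallel$. Next, I would invoke Theorem~\ref{eqn-thm1} to write $\widehat{B}_k$ in exactly the same form but with the lower-right block replaced by $\gamma_k^\perp I_{n-r}$. Since in both cases the matrix $P$ diagonalizes the Hessian approximation and is orthogonal, the columns of $P$ are eigenvectors of both $B_k$ and $\widehat{B}_k$, and the eigenvalues are the diagonal entries of the middle block.

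Finally, I would read off the $r$ shared eigenvalues from the top-left block $\hat\Lambda + \gamma_k I_r$: writing $\hat\Lambda = \diag(\hat\lambda_1,\ldots,\hat\lambda_r)$, these are precisely $\lambda_i = \hat\lambda_i + \gamma_k$, $i=1,\ldots,r$, as claimed. The remaining $n-r$ eigenvalues are $\gamma_k$ for $B_k$ and $\gamma_k^\perp$ for $\widehat{B}_k$, so they coincide only when $\gamma_k^\perp = \gamma_k$, which explains the specific count of $r$ common eigenvalues in the statement.

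There is essentially no obstacle here: the entire content of the corollary is contained in a term-by-term comparison of~\eqref{eq:eiglbfgs} and~\eqref{eq:Bkhat_eig}. The only small care needed is to note that the orthogonality of $P$ (hence its role as a genuine eigenbasis rather than merely a diagonalizing similarity) is already built into the construction of $P_\perp$ in Section~\ref{subsec:eigen}, so no additional verification is required.
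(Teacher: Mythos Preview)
Your proposal is correct and follows exactly the same approach as the paper: the paper's proof consists of the single observation that the corollary follows immediately by comparing \eqref{eq:eiglbfgs} with \eqref{eq:Bkhat_eig}. Your version simply spells out that comparison in more detail.
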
 
\begin{proof} The corollary
follows immediately by comparing (\ref{eq:eiglbfgs}) with \eqref{eq:Bkhat_eig}. $\square$
\end{proof}

\medskip

The results of Theorem~\ref{eqn-thm1} and Corollary~\ref{eqn-cor1} may seem surprising at
first since every term in the compact representation
((\ref{eq:comactlbfgs}) and (\ref{eqn-PsiM})) depends on the
initialization; moreover, $\widehat{B}_0$ is, generally speaking, a dense
matrix while $B_0$ is a diagonal matrix.  However, viewed from the
perspective of (\ref{eq:denseB0}), the parameter $\gamma^\perp_k$ only plays
a role in scaling the subspace spanned by the columns of $P_\perp$.

\medskip

The initialization $\widehat{B}_0$ allows for two separate curvature
approximations for the {\small BFGS} matrix: one in the space spanned by
columns of $P_\parallel$ and another in the space spanned by the columns of
$P_\perp$. 
In the next subsection, we show that this initialization 
is naturally well-suited for solving trust-region
subproblems defined by the shape-changing infinity norm.

\subsection{The trust-region subproblem} \label{subsec-decoupled}
Here we will show that the use of 
$\widehat{B}_0$ provides the same subproblem separability as
$B_0$ does in the case of the shape-changing infinity norm.

\medskip

For $\widehat{\vec{B}}_0$ given by
\eqref{eq:denseB0}, consider the objective function of the trust-region
subproblem (\ref{eq:subprobsc}) resulting from the change of variables
(\ref{eqn-varchange}):
\begin{eqnarray*}
	Q ( \vec{P}\vec{v} )
	&= & 
g_k^TPv + \frac{1}{2}v^TP^T\widehat{B}_kPv \\
& = & 
\vec{g}^T_{\parallel} \vec{v}_{\parallel}
	+ \frac{1}{2} \vec{v}^T_{\parallel} \left( 
	\hat{\vec{\Lambda}} +\gamma_k \vec{I}_r   \right) \vec{v}_{\parallel} 
	+ \vec{g}^T_{\perp} \vec{v}_{\perp} 
	+ \frac{1}{2}\gamma^\perp_k \left\| \vec{v}_{\perp} \right\|^2_2.
\end{eqnarray*}
Thus, (\ref{eq:subprobsc}) decouples into two subproblems: The
corresponding subproblem for $q_{\parallel}(\vec{v}_{\parallel})$ remains (\ref{eqn-sub1}) and the subproblem for $q_{\perp}(\vec{v}_{\perp})$ becomes
\begin{equation}\label{eq:q_perp2}
	\underset{ \left\| \vec{v}_{\perp} \right\|_{2} \le \Delta_k }{\text{ minimize }} q_{\perp}\left(\vec{v}_{\perp}\right) \defined \vec{g}^T_{\perp} \vec{v}_{\perp} + 
	\frac{1}{2}\gamma^\perp_k \left\| \vec{v}_{\perp} \right\|^2_2.
\end{equation}
The solution to (\ref{eq:q_perp2}) is now given by
\begin{equation}
\label{eq:subsolnperp2}
\vec{v}^*_{\perp} = \widehat{\beta} \vec{g}_{\perp},
\end{equation}
where
\begin{equation}
\label{eq:subsolnbeta2}
\widehat{\beta} =
\begin{cases}			
	-\frac{1}{\gamma^\perp_k} 							& \text{ if } \gamma^\perp_k > 0 \text{ and }  \left \| \vec{g}_{\perp} \right \|_2 \le \Delta_k |\gamma^\perp_k|, \\
-\frac{ \Delta_k}{\| \vec{g}_{\perp} \|_2}     & \text{ otherwise. }			
\end{cases}
\end{equation}

Thus, the solution  
$p^*$ can be expressed as
\begin{equation}\label{eqn-Roummel}
	p^* = \widehat{\beta} g + P_{\parallel}(v_{\parallel}^* - \widehat{\beta} g_{\parallel}),
\end{equation}
which can computed as efficiently as the solution in \eqref{eq:pstar} for conventional initial matrices
since they differ only by the scalar 
($\widehat{\beta}$ in (\ref{eqn-Roummel}) versus $\beta$ in (\ref{eq:pstar})).

\subsection{Determining the parameter $\gamma^\perp_k$}\label{subsec:gammaperp}
The values $\gamma_k$ and $\gamma^\perp_k$ can be updated at each iteration.
Since we have little information about the underlying function $f$ in the
subspace spanned by the columns of $P_\perp$, it is reasonable to make
conservative (i.e., large) choices  for $\gamma^\perp_k$.  Note that in the case that
$\gamma^\perp_k > 0 \text{ and } \left \| \vec{g}_{\perp} \right \|_2 \le
\Delta_k |\gamma^\perp_k|$, the parameter $\gamma^\perp_k$ scales the
solution $v_\perp^*$ (see \ref{eq:subsolnbeta2}); thus, large values of
$\gamma^\perp_k$ will reduce these step lengths in the space spanned by
$P_\perp$.  Since the space $P_\perp$ does not explicitly use information
produced by past iterations, it seems desirable to choose $\gamma^\perp_k$
to be large.  However, the larger that $\gamma^\perp_k$ is chosen, 
the closer $v^*_\perp$ will be to the zero vector.
Also note that if
$\gamma^\perp_k<0$ then the solution to the subproblem (\ref{eq:q_perp2})
will always lie on the boundary, and thus, the actual value of
$\gamma^\perp_k$ becomes irrelevant.  Moreover,
for values $\gamma^\perp_k<0$, $\widehat{B}_k$ is not guaranteed to 
be positive definite. 
For these reasons, we suggest
sufficiently large and positive values for $\gamma^\perp_k$
related to the 
curvature information gathered in $\gamma_1, \ldots, \gamma_k$.
  Specific
choices for $\gamma^\perp_k$ are presented in the numerical results
section.

\subsection{Implementation details} \label{sec-algorithm}
In this section, we describe how we incorporate the dense initialization
within the existing
{\small LMTR} algorithm~\cite{BurdakovLMTR16}.  
At the beginning of each iteration, the 
{\small LMTR} algorithm with dense initialization
checks if the
unconstrained minimizer 
(also known as the \emph{full quasi-Newton trial step}), 
\begin{equation}\label{eqn-pstar2}
p_u^* = - \hat{B}_k^{-1} g_k
\end{equation}
lies inside the trust region defined by the two-norm. 
Because our
proposed method uses a dense initialization, the
so-called ``two-loop recursion'' [6] is not applicable for computing
the unconstrained minimizer $p_u^*$ in (\ref{eqn-pstar2}).  
However, products with
$\hat{B}_k^{-1}$ can be performed using the compact representation
without involving a partial 
eigendecomposition. 
Specifically, if   $V_k = \left[S_k \ Y_k\right]$
with Cholesky factorization $V_k^TV_k = R_k^TR_k$, then
\begin{equation}\label{eqn-32}
\hat{B}_k^{-1} = \frac{1}{\gamma_k^{\perp}}I +
V_k \hat{M}_k V_k^T,
\end{equation}
where 
$$
\hat{M}_k =
\left[
\begin{matrix}
T_k^{-T}(D_k + \gamma_k^{-1}Y_k^TY_k)T_k^{-1} & -\gamma_k^{-1}T_k^{-T}\\
-\gamma_k^{-1}T_k^{-1} & 0_m
\end{matrix}
\right] + \alpha_k R_k^{-1} R_k^{-T},
$$
$\displaystyle \alpha_k = \left(\frac{1}{\gamma_k}
- \frac{1}{\gamma_k^{\perp}}\right)$, $T_k$ is the upper triangular
part of the matrix $S_k^TY_k$, and $D_k$ is its diagonal.
Thus, the inequality 
\begin{equation}\label{eqn-unconstrainedmin}
    \|p_u^*\|_2 \le \Delta_k
\end{equation}
is easily verified without explicitly forming $p_u^*$ 
using the identity
\begin{equation}\label{eqn-pustar}
\|p_u^*\|_2^2 = g_k^T \hat{B}_k^{-2} g_k = 
\gamma_k^{-2}\|g_k\|^2 + 2\gamma_k^{-1} u_k^T \hat{M}_k u_k + u_k^T \hat{M}_k (R_k^T R_k) \hat{M}_k u_k.
\end{equation}
Here, as in the LMTR algorithm, the vector $u_k = V_k^T g_k$ 
and $\| g_k \|^2$ can be computed efficiently at each iteration
(see \cite{BurdakovLMTR16} for details).
Thus, the computational cost of $\|p_u^*\|_2$ is low because 
\eqref{eqn-pustar} involves linear algebra operations in a small $2m$-dimensional space,
the most expensive of which are related to solving triangular systems with $T_k$ and $R_k$.
These operations grow in proportion to $m^2$ while the number of operations in 
\eqref{eqn-pstar2}-\eqref{eqn-32} grows in proportion to $mn$. Thus, the 
computational complexity ratio between using \eqref{eqn-pustar}  and
\eqref{eqn-pstar2}-\eqref{eqn-32} is  $m^2/(nm) = m/n \ll 1$ since we assume that $m \ll n$.
The norm
equivalence for the shape-changing infinity norm studied
in~\cite{BurdakovLMTR16} guarantees that (\ref{eqn-unconstrainedmin})
implies that the inequality $\|p_u^*\|_{P,\infty} \le \Delta_k$ is
satisfied; in this case, $p_u^*$ is the exact solution of the
trust-region subproblem defined by the shape-changing infinity
norm.

If (\ref{eqn-unconstrainedmin}) holds, the algorithm computes
$p_u^*$ for generating the trial point $x_k + p_u^*$.   It can be
easily seen that the cost of computing $p_u^*$ is $4mn$ operations,
i.e. it is the same as for computing search direction in the line
search L-BFGS algorithm [6].

On the other hand, if (\ref{eqn-unconstrainedmin})
does not hold, then for producing a trial point, the partial
eigendecomposition is computed, and the trust-region subproblem is
decoupled and solved exactly as described in Section \ref{subsec-decoupled}.

\subsection{The algorithm and its properties}
In Algorithm~\ref{alg}, we present a basic trust-region method that
uses the proposed dense initialization. In this setting, we consider
the computational cost of the proposed method, and we prove global
convergence of the overall trust-region method.  Since $P$ may change
every iteration, the corresponding norm $\|\cdot\|_{P,\infty}$ may
change each iteration.  Note that initially there are no stored
quasi-Newton pairs $\{s_j,y_j\}$.  In this case, we assume $P_{\perp}
= I_n$ and $P_{\parallel}$ does not exist, i.e., $\hat{B}_0=\gamma_0^\perp
I$.

\begin{algorithm}[htp]
\caption{An L-BFGS trust-region method with dense initialization} \label{alg}
\begin{algorithmic}[1]
\REQUIRE $x_0\in R^n$, \ $\Delta_0>0$, \ $\epsilon > 0$, \ $\gamma_0^{\perp}>0$\
, \ $0 \leq \tau_1 < \tau_2< 0.5 < \tau_3<1$, \\
$0<\eta_1<\eta_2\leq 0.5<\eta_3<1<\eta_4$, $0 < c_3 < 1 $
\STATE Compute $g_0$
\FOR{$k=0,1,2,\ldots$}
        \IF{$\|g_k\|\leq\epsilon$}
        \RETURN
        \ENDIF
        \STATE 
        Compute $\| p_u^* \|_2$ using \eqref{eqn-pustar}
        \IF{ $\|p_u^*\|_2 > \Delta_k$} 
              \STATE Compute $p^*$ for $\hat{B}_k$ using (\ref{eqn-Roummel}), where $\widehat{\beta}$ is computed using (\ref{eq:subsolnbeta2}) and $\vec{v}^*_{\parallel}$ as in (\ref{eqn:solution_vparallel})
        \ELSE
           \STATE Compute $p_u^*$ using \eqref{eqn-pstar2}-\eqref{eqn-32} and set $p^*\gets p_u^*$
        \ENDIF
        \STATE Compute the ratio $\rho_k = \frac{f(x_k+p^*)-f(x_k)}{Q(p^*)}$
        \IF{$\rho_k {\geq \tau_1}$}
                \STATE $x_{k+1}=x_k+p^*$
                \STATE Compute $g_{k+1}$, $s_k$, $y_k$, $\gamma_{k+1}$ and $\gamma_{k+1}^{\perp}$
        \STATE Choose at most $m$ pairs $\{s_j, y_j\}$ such that $ s_j^Ty_j > c_3 \| s_j \| \| y_j \| $
        \STATE Compute $\Psi_{k+1}, R^{-1}, M_{k+1}, W, \hat{\Lambda}$ and $\Lambda$ 
as described in Section~\ref{sec:background}

        \ELSE
                \STATE $x_{k+1} = x_k$
        \ENDIF
        \IF{$\rho_k < \tau_2$}
                \STATE $\Delta_{k+1} = \min \left({\eta_1}\Delta_k, {\eta_2}\|s_k\|_{P,\infty} \right)$
        \ELSE
                \IF{$\rho_k \geq \tau_3$ \AND $\|s_k\|_{P,\infty}\geq {\eta_3} \Delta_k$\
}
                        \STATE $\Delta_{k+1} = {\eta_4} \Delta_k$
                \ELSE
                        \STATE{$\Delta_{k+1}=\Delta_k$}
                \ENDIF
        \ENDIF
\ENDFOR
\end{algorithmic}
\end{algorithm}

The only difference between Algorithm~\ref{alg} and the 
{\small LMTR} algorithm in~\cite{BurdakovLMTR16} is the initialization
matrix.  Computationally speaking, the use of a dense initialization
in lieu of a diagonal initialization plays out only in the computation
of $p^*$ by (\ref{eqn-finalsolution}).  However, there is no
computational cost difference: The cost of computing the value for
$\beta$ using (\ref{eq:subsolnbeta2}) in Algorithm~\ref{alg} instead
of (\ref{eq:subsolnbeta}) in the {\small LMTR} algorithm is the same.
Thus, the dominant cost per iteration for both Algorithm~\ref{alg} and
the {\small LMTR} algorithm is $4mn$ operations
(see~\cite{BurdakovLMTR16} for details).  Note that this is the same
cost-per-iteration as the line search \LBFGS{}
algorithm~\cite{ByrNS94}.

\medskip

In the next result, we provide a global convergence result for
Algorithm~\ref{alg}.  This result is based on the convergence analysis
presented in~\cite{BurdakovLMTR16}.

\begin{theorem}\label{th_conv}
Let  $f:R^n\rightarrow R$ be twice-continuously differentiable and bounded below on $R^n$. Suppose that there exists a scalar $c_1>0$ such that 
\begin{equation}\label{bound_hessian}
\|\nabla^2f(x)\|\leq c_1, \ \forall x \in R^n.
\end{equation}
Furthermore, suppose for $\hat{B}_0$ defined by (\ref{eq:denseB0}), that there exists a positive scalar $c_2$ such that
\begin{equation}\label{bound_gamma}
\gamma_k , \gamma_k^{\perp} \in (0,c_2], \ \forall k\ge 0,
\end{equation}
and there exists a scalar $c_3 \in (0,1)$ such that the inequality
\begin{equation}\label{bound_sy}
s_j^T y_j >   c_3 \|s_j\| \|y_j\|
\end{equation} 
holds for each quasi-Newton pair $\{s_j, y_j\}$. 
Then, if the
stopping criteria is suppressed, the infinite sequence $\{x_k\}$
generated by Algorithm~\ref{alg}
satisfies \begin{equation}\label{conv}
\lim _{k\rightarrow \infty} \|\nabla f(x_k)\| = 0.
\end{equation}
\end{theorem}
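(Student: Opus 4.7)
The strategy is to reduce the claim to the convergence theorem for the LMTR algorithm in~\cite{BurdakovLMTR16}; the only substantive new ingredient required is a uniform bound on $\|\hat{B}_k\|$ that accommodates the dense initialization~\eqref{eq:denseB0}. Since Algorithm~\ref{alg} differs from LMTR only in its initialization matrix and, correspondingly, in the scalar $\widehat{\beta}$ appearing in the perpendicular subproblem, the remainder of the convergence argument will transfer essentially verbatim once such a bound is in place.

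The first step is to bound $\{\hat{B}_k\}$ uniformly in $k$. Theorem~\ref{eqn-thm1} identifies the spectrum of $\hat{B}_k$ as $\gamma_k^\perp$ with multiplicity $n-r$, together with $\{\hat{\lambda}_i+\gamma_k\}_{i=1}^r$. The first group is bounded by $c_2$ via~\eqref{bound_gamma}. For the second group, taking the trace of~\eqref{eq:recursion} gives an increment of $\|y_j\|^2/(s_j^Ty_j)-\|\hat{B}_js_j\|^2/(s_j^T\hat{B}_js_j)\le\|y_j\|^2/(s_j^Ty_j)$ per update; combining~\eqref{bound_hessian} (which yields $\|y_j\|\le c_1\|s_j\|$) with~\eqref{bound_sy} shows this is at most $c_1/c_3$. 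Because $\text{tr}(\hat{B}_0)\le n c_2$ and at most $m$ pairs are retained, the positive-definite matrix $\hat{B}_k$ satisfies $\|\hat{B}_k\|\le\text{tr}(\hat{B}_k)\le n c_2 + m c_1/c_3$ uniformly in $k$.

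The second step is to verify that the subproblem solution in Algorithm~\ref{alg} produces the Cauchy-type model decrease required by the trust-region machinery. By Section~\ref{subsec-decoupled}, the dense initialization only replaces $\beta$ in~\eqref{eq:subsolnbeta} by $\widehat{\beta}$ in~\eqref{eq:subsolnbeta2}; the parallel subproblem~\eqref{eqn-sub1} is unaffected, while~\eqref{eq:q_perp2} remains strictly convex (since $\gamma_k^\perp>0$) with closed-form minimizer~\eqref{eq:subsolnperp2}. Using only the upper bound $\gamma_k^\perp\le c_2$, a direct computation gives decrease of at least $\tfrac{1}{2}\|g_\perp\|_2\min(\Delta_k,\|g_\perp\|_2/c_2)$ in the perpendicular direction; combined with the parallel-direction decrease, this matches the Cauchy bound used in~\cite{BurdakovLMTR16}, so that convergence theorem applies and delivers~\eqref{conv}.

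The main obstacle is the trace bound of the first step: the dense, $k$-dependent initialization $\hat{B}_0$ could in principle disrupt the standard BFGS trace estimate. The rescue is Corollary~\ref{eqn-cor1}, which shows that the eigenvalues of $\hat{B}_k$ inside $\text{Range}(P_\parallel)$ coincide with those of the diagonally-initialized $B_k$, so the classical BFGS trace argument transfers without modification, while the remaining eigenvalues are just $\gamma_k^\perp\in(0,c_2]$ by~\eqref{bound_gamma}.
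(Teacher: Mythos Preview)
Your proposal is correct and follows essentially the same route as the paper: establish a uniform bound on $\|\hat{B}_k\|$ and then invoke the convergence theorem of~\cite{BurdakovLMTR16}. The paper's own proof is even terser---it simply observes $\|\hat{B}_0\|\le c_2$ from~\eqref{bound_gamma} and cites~\cite[Lemma~3]{BurdakovLMTR16} for the bound on $\|\hat{B}_k\|$, whereas you unpack the underlying trace argument and the Cauchy-type decrease explicitly; but the logical skeleton is the same.
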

\begin{proof}
From \eqref{bound_gamma}, we have 
$\|\hat{B}_0\| \le c_2,$ which
holds for each $k \ge 0$.  Then,
by~\cite[Lemma~3]{BurdakovLMTR16}, there exists $c_4 > 0$ such that 
$$
\|\hat{B}_k\| \le c_4.
$$
Then, \eqref{conv} follows from~\cite[Theorem~1]{BurdakovLMTR16}. $\square$
\end{proof}

\bigskip

In the following section, we consider 
$\gamma_k^{\perp}$ parameterized by two scalars, $c$ and $\lambda$:
\begin{equation}\label{eqn-gammaperp1}
\gamma_k^{\perp}(c,\lambda) = \lambda c \gamma_k^{\max} + (1 - \lambda)\gamma_k,
\end{equation}
where $c \ge 1, \lambda \in [0,1]$, and
$$
 \gamma_k^{\text{max}}\defined \underset{1 \le i \le k}{ \text{ max } \gamma_i },
 $$
where  $\gamma_k$ is taken to be
the conventional initialization given by (\ref{eqn-B0-usual}).  
(This choice for $\gamma_k^\perp$ will be further discussed
in Section \ref{sec:numexp}.)  We now show that Algorithm \ref{alg}
converges for these choices of $\gamma_k^\perp$.  Assuming that
(\ref{bound_hessian}) and (\ref{bound_sy}) hold, it remains to show that
(\ref{bound_gamma}) holds for these choices of $\gamma_k^\perp$.  To see
that (\ref{bound_gamma}) holds, notice that in this case, $$\gamma_k
= \frac{y_k^Ty_k}{s_k^Ty_k}\le \frac{y_k^Ty_k}{c_3\|s_k\|\|y_k\|}
\le \frac{\|y_k\|}{c_3\|s_k\|}.$$  Substituting in for the definitions of $y_k$ and $s_k$ yields that
$$
\gamma_k \le \frac{\|\nabla f(x_{k+1})-\nabla f(x_k)\|}{c_3\|x_{k+1}-x_k\|},$$
implying that (\ref{bound_gamma}) holds.  Thus, Algorithm~\ref{alg} converges
for these choices for $\gamma_k^\perp$.

\section{Numerical Experiments}
\label{sec:numexp}
We performed numerical experiments using a Dell Precision T1700 machine with an Intel i5-4590 {\small CPU} at $3.30${\small GH}z X4 and 8{\small GB RAM} using {\small MATLAB} 2014a.
The test set consisted of 
 65 large-scale ($1000\le n \le 10000$)
{\small CUTE}st~\cite{GouOT03} test problems, made up of all the test problems
in~\cite{BurdakovLMTR16} plus an additional three ({\small FMINSURF},
{\small PENALTY2}, and {\small TESTQUAD}~\cite{GouOT03}) since at least one of the methods
in the experiments detailed below converged on one of these three
problems.  
The same trust-region method and default parameters as
in~\cite[Algorithm 1]{BurdakovLMTR16} were used for the outer iteration.
At most five quasi-Newton pairs $\{ s_k, y_k \}$ were stored,
i.e., $m = 5$.
The relative stopping criterion was 
$$ \left\| \vec{g}_k \right\|_2 \le \epsilon \max \left( 1,
  \left\| \vec{x}_k \right\|_2 \right), $$ 
with $\epsilon=10^{-10}$.  The
initial step, $p_0$, was determined by a backtracking line-search along the
normalized steepest descent direction.  
To compute the partial eigendecomposition of $B_k$, we used the {\small QR} factorization
instead of the {\small SVD} because the  {\small QR} version outperformed the {\small SVD} 
version in numerical experiments not presented here.
The rank of $ \vec{\Psi}_k $ was
estimated by the number of positive diagonal elements in the diagonal
matrix of the $\text{LDL}^{\text{T}}$ decomposition (or eigendecomposition
of $ \vec{\Psi}^T_k \vec{\Psi}_k $) that are larger than the threshold $
\epsilon_r = (10^{-7})^2 $. (Note that the columns of $\Psi_k$
are normalized.). We used the value $c_3 = 10^{-8}$ in \eqref{bound_sy}
for testing whether to accept a new quasi-Newton pair.

We provide performance profiles (see \cite{DolanMore02}) for the number of
iterations (\texttt{iter}) where the trust-region step is 
accepted
and the average time (\texttt{time}) for each
solver on the test set of problems.  The performance metric, $ \rho $, for
the 65 problems is defined by
	\begin{equation*}
		\rho_s(\tau) = \frac{\text{card}\left\{ p : \pi_{p,s} \le \tau \right\}}{65} \quad \text{and} \quad \pi_{p,s} = \frac{t_{p,s}}{ \underset{1\le i \le S}{\text{ min } t_{p,i}} },
	\end{equation*} 
	where $ t_{p,s}$ is the ``output'' (i.e., time or iterations) of
        ``solver'' $s$ on problem $p$. Here $ S $ denotes the total number of solvers for a given comparison. This metric measures
        the proportion of how close a given solver is to the best
        result.  We observe as in \cite{BurdakovLMTR16} that the first runs
        significantly differ in time from the remaining runs, and thus, we
        ran each algorithm ten times on each problem, reporting the average
        of the final eight runs.

\bigskip

In this section, we present the following six types of experiments involving {\small LMTR}:
\newline

\begin{enumerate}
\item A comparison of results for different values of $\gamma_k^{\perp}(c,\lambda)$. 
\item Two versions of computing the full quasi-Newton trial step 
 are compared. One version uses the dense initialization to
    compute $p_u^*$ as described in Section 3.4 (see (\ref{eqn-pstar2})); the other uses the conventional
    initialization, i.e., $p_u^*$ is computed as
    $p_u^*=B_k^{-1}g_k$. 
    When the full quasi-Newton trial step is not accepted in any of the versions, the dense initialization is used for computing trial step by explicitly solving the trust-region subproblem (Section 3.2).
\item A comparison of {\small LMTR} together with a
dense initialization and the line search {\small L-BFGS} method
with the conventional initialization.
\item A comparison of {\small LMTR} 
with a
dense initialization and 
{\small L-BFGS-TR}~\cite{BurdakovLMTR16},
which computes a scaled quasi-Newton direction that lies inside a trust
region.  This method can be viewed as a hybrid line search and trust-region
algorithm.
\item A comparison of the dense and conventional initializations.
\end{enumerate}

\medskip 

In the experiments below, the dense initial matrix $\widehat{B}_0$
corresponding to $\gamma_k^{\perp}(c,\lambda)$ given in \eqref{eqn-gammaperp1}
will be denoted by
$$
	\widehat{B}_0(c,\lambda) 
	\defined
	\gamma_k P_{\parallel}P_{\parallel}^T + 
	\gamma_k^{\perp}(c,\lambda) P_{\perp}P_{\perp}^T.
$$
Using this notation, the conventional initialization $B_0(\gamma_k)$
can be written as $\widehat{B}_0(1,0)$.

\medskip

\noindent
{\bf Experiment 1.}
In this experiment, we consider various scalings of a proposed $ \gamma^{\perp}_{k} $ using
{\small LMTR}.  
As argued in Section \ref{subsec:gammaperp},  it is reasonable
to choose $\gamma_k^\perp$ to be large and positive; in particular, 
$\gamma_k^\perp\ge\gamma_k$.  Thus, we 
consider the parametrized family of choices $\gamma_k^{\perp} \defined \gamma_k^{\perp}(c,\lambda)$
given in \eqref{eqn-gammaperp1}.
	These choices correspond
        to conservative strategies for computing steps in the space spanned
        by $ \vec{P}_{\perp} $ (see the discussion in Section \ref{subsec:gammaperp}).
Moreover, these can also be viewed as conservative strategies since
the trial step computed using $B_0$ will always be larger in Euclidean
norm than the trial step computed using $\widehat{B}_0$ using (\ref{eqn-gammaperp1}).
To see this, note that in the parallel subspace the solutions will
be identical using both initializations since the solution $v_\parallel^*$ does
not depend on $\gamma_k^\perp$ (see (\ref{eqn:solution_vparallel})); in contrast,
in the orthogonal subspace, $\|v_\perp^*\|$ inversely depends on $\gamma_k^\perp$
(see (\ref{eq:subsolnperp2}) and (\ref{eq:subsolnbeta2})).

\medskip

We report results using different
values of $c$ and $\lambda$ for $\gamma^\perp_k(c,\lambda)$ 
on two sets of tests.  
On the first set of tests, 
the dense initialization was used for the entire {\small LMTR} algoirthm.
However, for the second set of tests, 
the dense initialization was not used for the computation
of the unconstrained minimizer $p_u^*$;  
that is,  {\small LMTR} was run
using $B_k$ (initialized with $B_0=\gamma_k I$ where $\gamma_k$ is given in
(\ref{eqn-B0-usual})) for the computation of the unconstrained minimizer
$p_u^*=-B_k^{-1}g_k$.  However, if the unconstrained minimizer was not
taken to be the approximate solution of the subproblem, $\widehat{B}_k$
with the dense initialization was used for computing the constrained minimizer
with respect to the shape-changing norm (see line 8 in Algorithm 1)
with
$\gamma_k^{\perp}$ defined as in \eqref{eqn-gammaperp1}.
The values of $c$ and $\lambda$ chosen for Experiment 1 are found in Table \ref{table:gammaperp-JE}.
(See Section~\ref{sec-algorithm} for details on the {\small LMTR} algorithm.)

\begin{table} \label{table:gammaperp-JE}
\caption{Values for $\gamma_k^{\perp}$ used in Experiment 1.}
\renewcommand{\arraystretch}{1.2}
\begin{tabular}{ccl}
\hline
\multicolumn{2}{c}{Parameters} \\
\cline{1-2}
$c$ & $\lambda$ & $\gamma_k^{\perp}$  \\  
\hline
$1$ & $1$ & $\gamma_k^{\max}$  \\
$2$ & $1$ & $2\gamma_k^{\max}$ \\
 $1$ & $\frac{1}{2}$ & $\frac{1}{2} \gamma_k^{\max} +  \frac{1}{2}\gamma_k$ \\
$1$ & $\frac{1}{4}$ & $\frac{1}{4}\gamma_k^{\max} + \frac{3}{4} \gamma_k$  \\ 
\hline
\end{tabular}
\end{table}

Figure~\ref{fig:1B} displays the
performance profiles 
using  the chosen values of $c$ and $\lambda$ to define $\gamma_k^{\perp}$ 
in the case when the
dense initialization was used for both the computation
of the unconstrained minimizer $p_u^*$ (line 10 of Algorithm 1) as well as for the
constrained minimizer with respect to the shape-changing norm (line 8 of Algorithm 1),
which is denoted in the legend of plots in
Figure~\ref{fig:1B} by the use of an asterisk $ (*)$.
The results of Figure~\ref{fig:1B} suggest the choice
of $c=1$ and $\lambda=\frac{1}{2}$ outperform the other
chosen combinations for $c$ and $\lambda$.
In experiments not reported here, 
larger values of $c$ did not appear to improve performance; for $c<1$,
performance deteriorated.
Moreover, other choices for 
$\lambda$, such as $ \lambda=\frac{3}{4}$, did not improve results
beyond the choice of $\lambda=\frac{1}{2}$.

	\begin{figure*}[h!]
				\begin{minipage}{0.48\textwidth}
					\includegraphics[width=\textwidth]{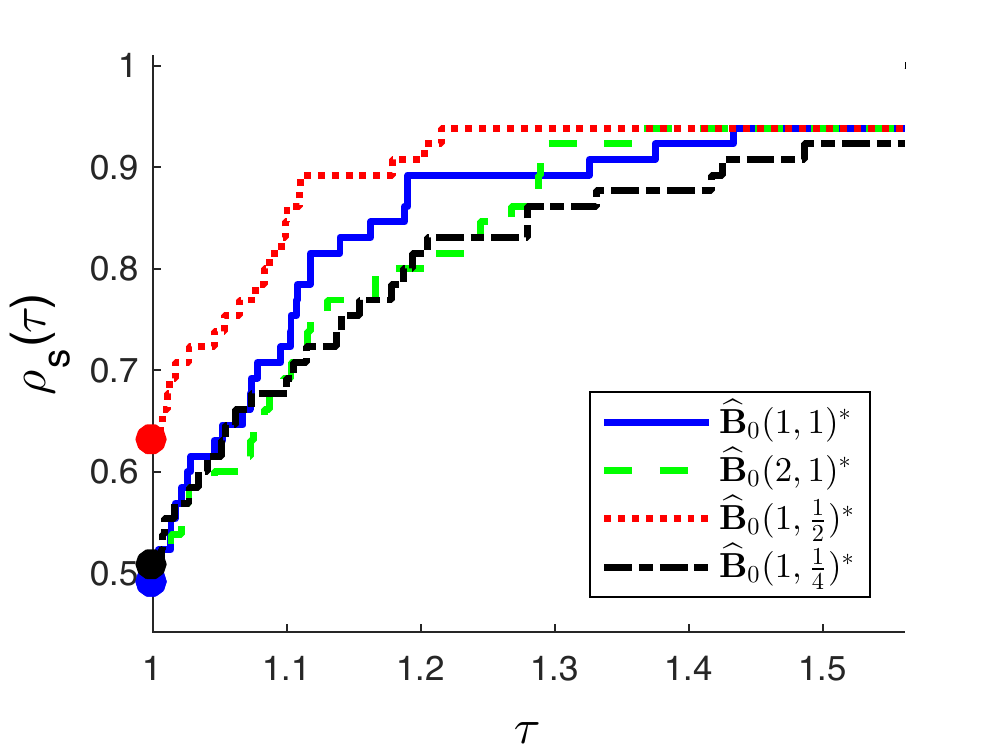}
				\end{minipage}
				\hfill
				\begin{minipage}{0.48\textwidth}
							\includegraphics[width=\textwidth]{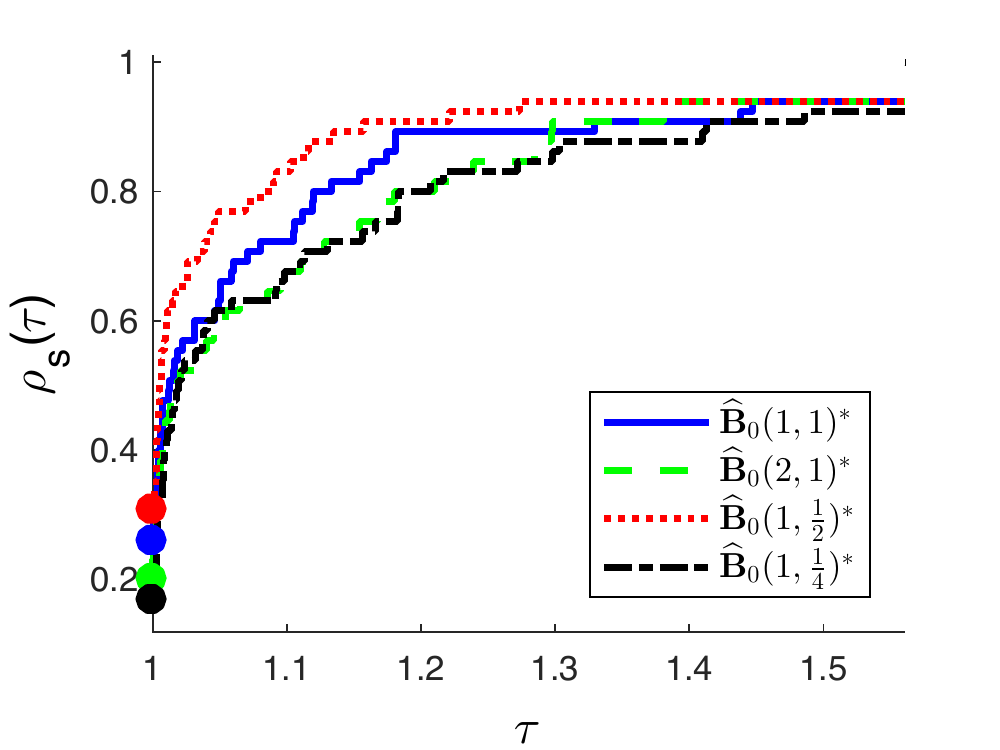}
				\end{minipage}
				\caption{
Performance profiles comparing \texttt{iter} (left) and \texttt{time} (right)  
for the different values of $\gamma_k^\perp$
given in Table~\ref{table:gammaperp-JE}. 
In the legend, $\widehat{B}_0(c,\lambda)$ denotes the results from 
using the dense initialization
with the given values for $c$ and $\lambda$ to define $\gamma_k^\perp$.
In this experiment, the dense initialization
was used for all aspects  
of the algorithm.}
			\label{fig:1B}       
	\end{figure*}

Figure~\ref{fig:1A} reports the 
performance profiles for 
using the chosen values of $c$ and $\lambda$ to define $\gamma_k^{\perp}$ 
in the case when 
the dense initialization 
was only used for the
computation of the constrained minimizer (line 8 of Algorithm 1)
--denoted in the legend of plots in
Figure~\ref{fig:1A} by the absence of an asterisk $ (*)$.
In this test, the combination of
$c=1$ and $\lambda=1$  as well as 
$c=1$ and $\lambda=\frac{1}{2}$
appear to slightly outperform
the other two choices for  $\gamma^\perp_k$ in terms of both
then number of iterations and the total computational time.
Based on the results in Figure~\ref{fig:1A}, 
we do not see a reason to prefer either
combination $c=1$ and $\lambda=1$ or
$c=1$ and $\lambda=\frac{1}{2}$
over the other.

Note that for the {\small CUTE}st problems, the full quasi-Newton trial step is accepted
as the solution to the subproblem
on the overwhelming
majority of problems. Thus, if the scaling $\gamma_k^\perp$ is
used only when the full trial step is rejected, it has less of an affect
on the overall performance of the algorithm; i.e., the algorithm is less
sensitive to the choice of $\gamma_k^\perp$.  For this reason, it is not
surprising that the performance profiles in Figure~\ref{fig:1A}
for the different values
of $\gamma_k^\perp$ are more indistinguishable than those in
Figure~\ref{fig:1B}. 

Finally, similar to the results in the case when the dense initialization
was used for the entire algorithm (Figure~\ref{fig:1B}), other values of
$c$ and $\lambda$ did not significantly improve the performance
provided by $c=1$ and $\lambda=\frac{1}{2}$.

	\begin{figure*}[h!]
				\begin{minipage}{0.48\textwidth}
					\includegraphics[width=\textwidth]{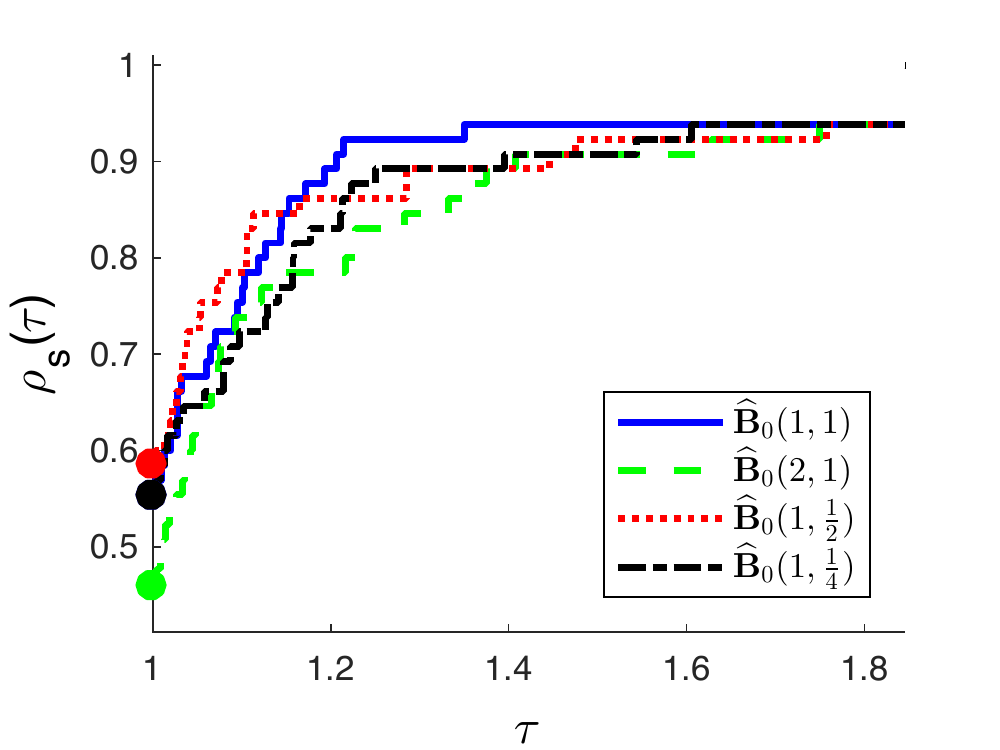}
				\end{minipage}
				\hfill
				\begin{minipage}{0.48\textwidth}
							\includegraphics[width=\textwidth]{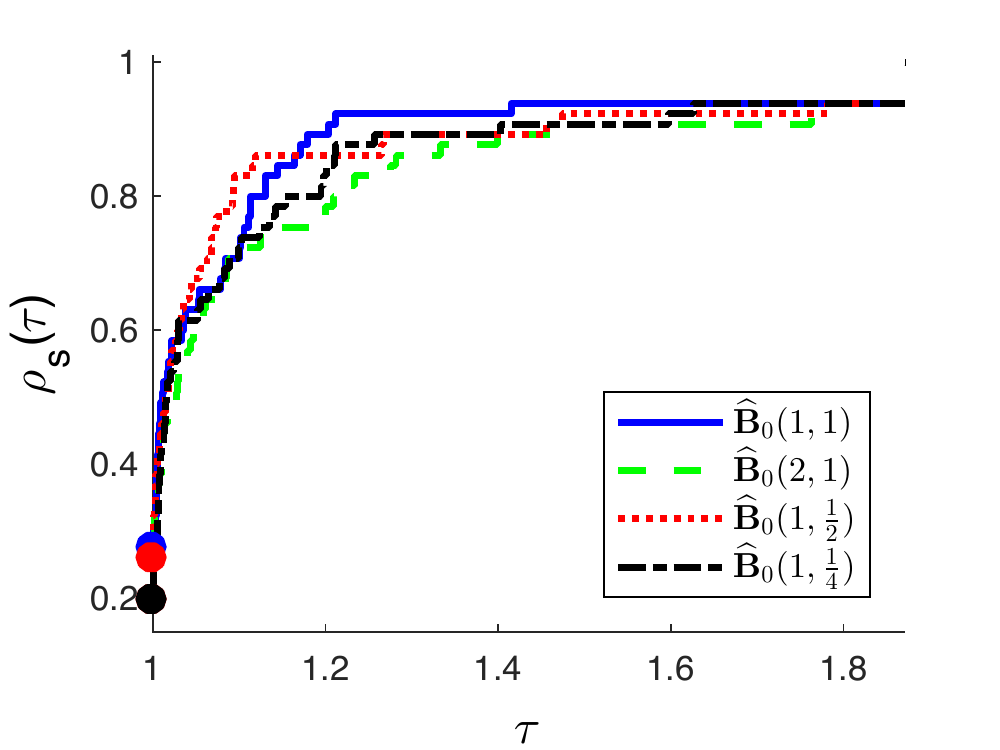}
				\end{minipage}
				\caption{
Performance profiles comparing \texttt{iter} (left) and \texttt{time} (right)  
for the different values of $\gamma_k^\perp$
given in Table~\ref{table:gammaperp-JE}. 
In the legend, $\widehat{B}_0(c,\lambda)$ denotes the results from 
using the dense initialization
with the given values for $c$ and $\lambda$ to define $\gamma_k^\perp$.
 In this experiment, the dense initialization
was only used for the computation of the constrained minimizer (line 8 of Algorithm 1).}
			\label{fig:1A}       
	\end{figure*}

\bigskip
\noindent
{\bf Experiment 2.} This experiment was designed to test whether
it is advantageous to use the dense initialization for all aspects of the
{\small LMTR} algorithm or just for the computation of the constrained minimizer (line 8 of Algorithm 1).
For any given trust-region subproblem, using the dense initialization for computing the unconstrained minimizer is computationally
more expensive than using a diagonal initialization; however, it is possible
that extra computational time associated with using the dense initialization for all
aspects of the {\small LMTR} algorithm may yield
a more overall efficient solver.
For these tests, we compare the top performer 
in the case when the dense initialization
is used for all aspects of {\small LMTR}, 
i.e., $(\gamma_k^{\perp}(1,\frac{1}{2}))$, to one of the top performers in
  the case when the dense initialization is used only for the
computation of the constrained minimizer (line 8 of Algorithm 1),
  i.e., $(\gamma_k^{\perp}(1,1))$.

	\begin{figure*}[h!]
				\begin{minipage}{0.48\textwidth}
					\includegraphics[width=\textwidth]{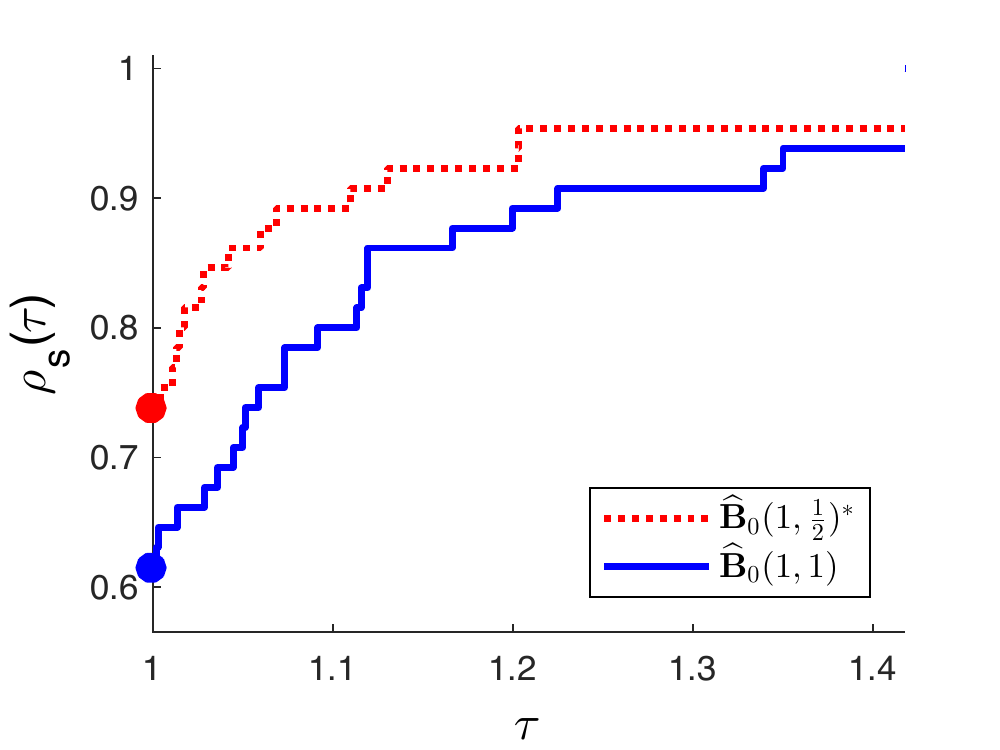}
				\end{minipage}
				\hfill
				\begin{minipage}{0.48\textwidth}
							\includegraphics[width=\textwidth]{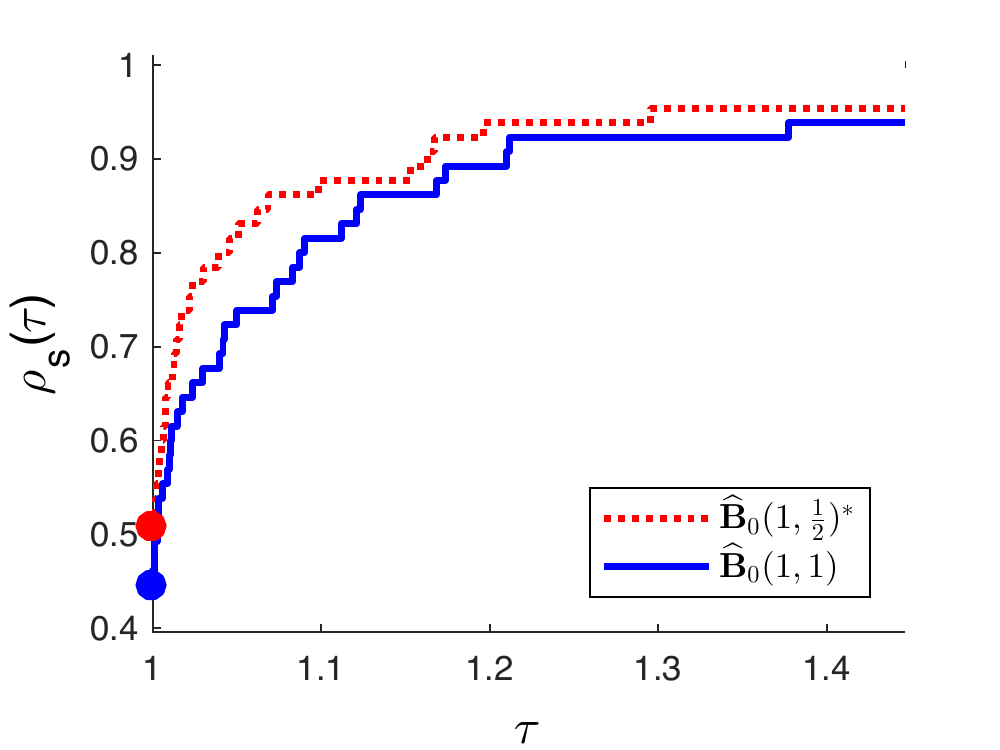}
				\end{minipage}
				\caption{
Performance profiles of \texttt{iter} (left) and
\texttt{time} (right) for Experiment 2.  
In the legend, 
the asterisk after
$\widehat{B}_0(1,\frac{1}{2})^*$
 signifies that the dense initialization 
was used for all aspects of the {\small LMTR} algorithm;
without the asterisk,
$\widehat{B}_0(1,1)$
signifies the test where the dense initialization 
is used only for the computation of the constrained minimizer (line 8 of Algorithm 1).
}
			\label{fig:2}       
	\end{figure*}

The performance profiles comparing
the results of this experiment
are presented in Figure \ref{fig:2}.   
        These results suggest that using the dense initialization
        with 
        $\gamma_k^{\perp}(1,\frac{1}{2})$
	for all aspects of
	the {\small LMTR} algorithm is more efficient than using dense
	initializations only for the computation of the constrained minimizer (line 8 of Algorithm 1).
	In other words, even though using dense initial matrices for the computation of the
        unconstrained minimizer imposes 
        an additional computational burden,
        it generates steps that expedite the convergence of the overall trust-region method.

\bigskip
\noindent
{\bf Experiment 3.}  In this experiment, we compare the performance of the
dense initialization
$\gamma_k^{\perp}(1,0.5)$
to that of the line-search {\small L-BFGS} algorithm.
 For this comparison, we used the publicly-available {\small MATLAB}                
wrapper~\cite{BeckerLbfgs} for the {\small FORTRAN} {\small L-BFGS-B} code     
developed by Nocedal et al.~\cite{ZhuByrdNocedal97}.  The initialization for {\small L-BFGS-B} is $B_0=\gamma_kI$ where             
$\gamma_k$ is given by (\ref{eqn-B0-usual}).           
To make the stopping criterion equivalent to that of {\small L-BFGS-B},
we modified the stopping criterion of our solver         
to~\cite{ZhuByrdNocedal97}:                                                   
      \begin{equation*}                                                       
               \left\| \vec{g}_k \right\|_{\infty} \le \epsilon.              
       \end{equation*}                                                         
The dense initialization was used for all aspects of {\small LMTR}.
         
         The performance profiles for this experiment is given in Figure~\ref{fig:exp5}.
         On this test set, the dense initialization outperforms {\small L-BFGS-B}
         in terms of both the number of iterations and the total computational time.

  \begin{figure*}[h!]
                        \begin{minipage}{0.48\textwidth}
                                \includegraphics[width=\textwidth]{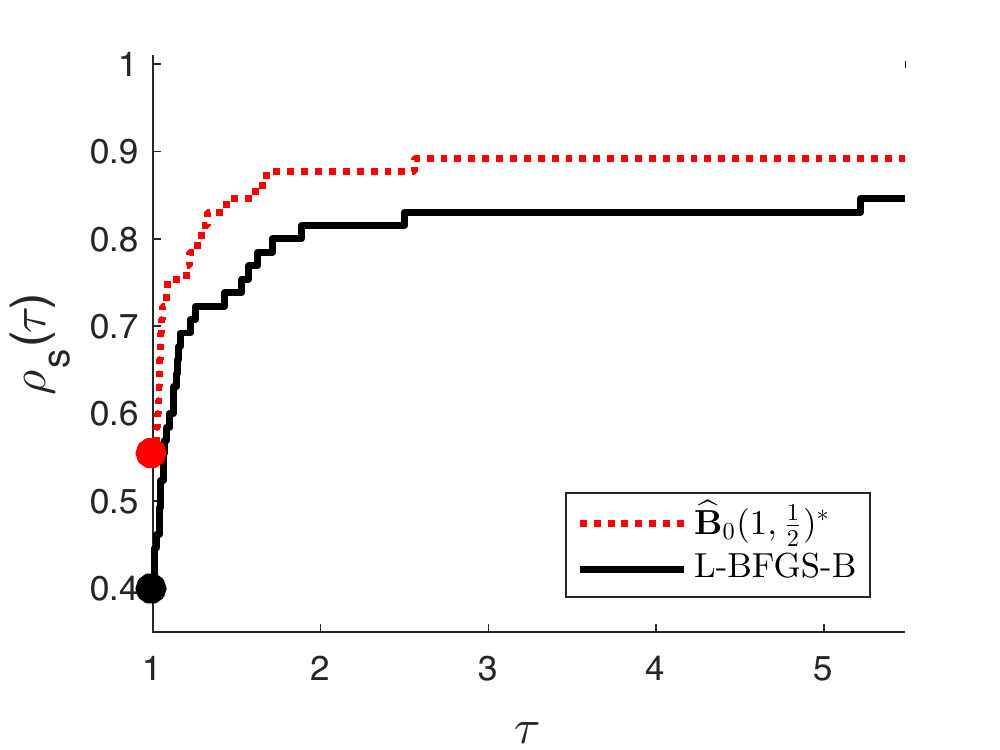}
                                
                        \end{minipage}
                        \hfill
                        \begin{minipage}{0.48\textwidth}
                                                \includegraphics[width=\textwidth]{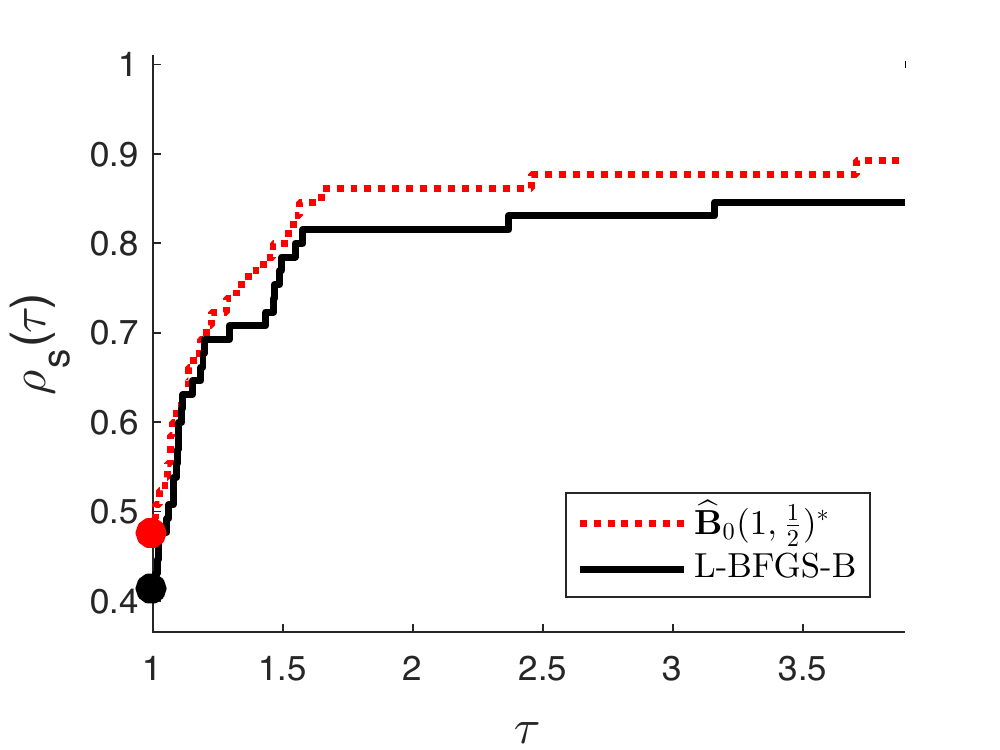}
                                                
                        \end{minipage}
                        \caption{Performance profiles of 
                        \texttt{iter} (left) and
\texttt{time} (right) for Experiment 3 comparing LMTR with the dense initialization
with
$\gamma_k^{\perp}(1,\frac{1}{2})$
 to {\small L-BFGS-B}.}
                \label{fig:exp5}       
                \end{figure*}

\bigskip
\noindent
{\bf Experiment 4.} In this experiment, we compare {\small LMTR} with
a dense initialization to {\small L-BFGS-TR}~\cite{BurdakovLMTR16},
which computes 
an \LBFGS{} trial step whose length is bounded by a
trust-region radius.  This method can be viewed as a hybrid \LBFGS{}
line search and trust-region algorithm 
because it uses a standard trust-region framework (as {\small LMTR}) but computes
a trial point by minimizing the quadratic model in the trust region
along the \LBFGS{} direction.
In~\cite{BurdakovLMTR16}, it was determined that this algorithm
outperforms two other versions of \LBFGS{} that use a Wolfe line search.
(For further details, see~\cite{BurdakovLMTR16}.)

  \begin{figure*}[h!]
                        \begin{minipage}{0.48\textwidth}
                                \includegraphics[width=\textwidth]{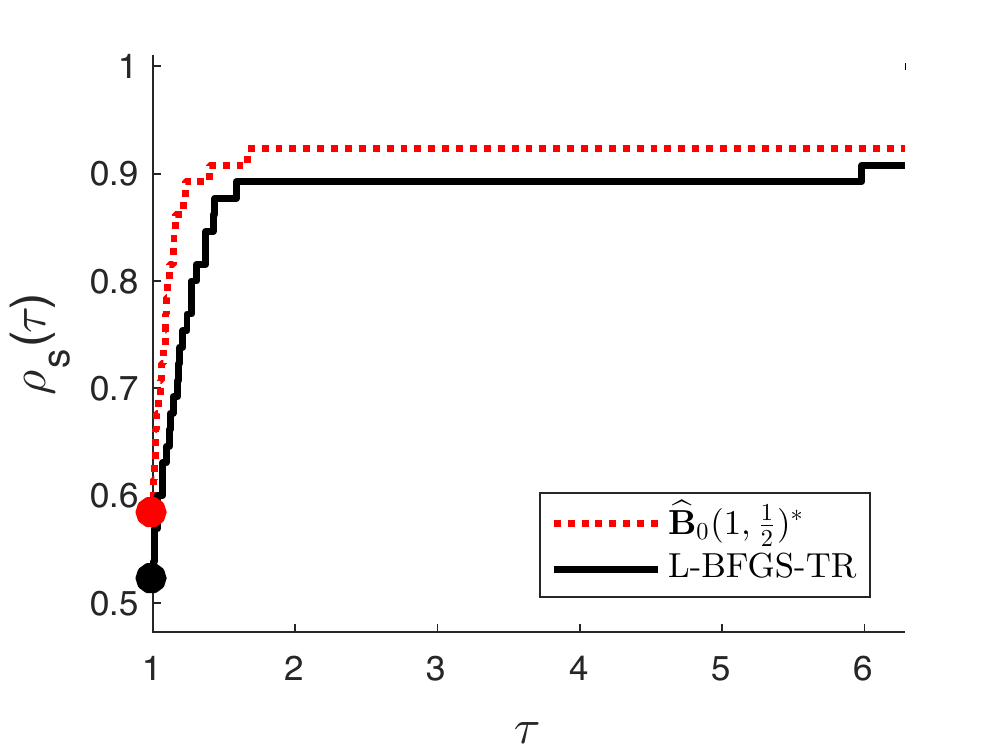}
                                
                        \end{minipage}
                        \hfill
                        \begin{minipage}{0.48\textwidth}
                                                \includegraphics[width=\textwidth]{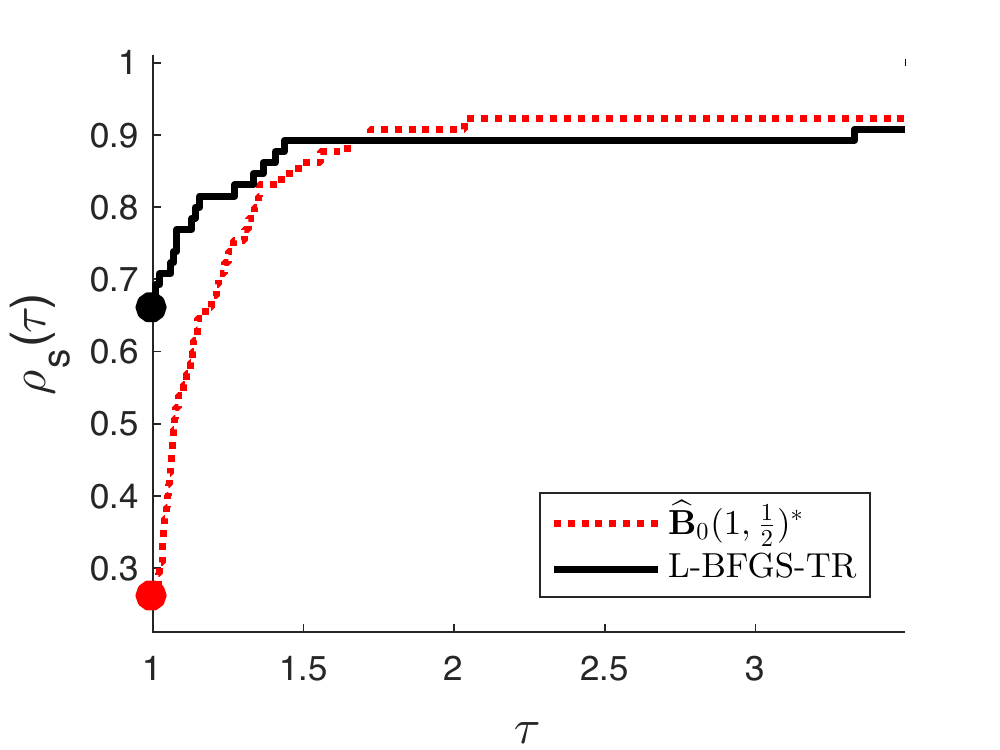}
                                
                        \end{minipage}
                        \caption{Performance profiles of 
                        \texttt{iter} (left) and
\texttt{time} (right) for Experiment 4
                        comparing  LMTR with the dense initialization
with 
$\gamma_k^{\perp}(1,\frac{1}{2})$
 to L-BFGS-TR.}
     
                \label{fig:exp6}       
                \end{figure*}
                
Figure~\ref{fig:exp6} displays the performance profiles associated with this experiment on
the entire set of test problems.  For this experiment, the dense initialization 
with $\gamma_k^{\perp}(1,\frac{1}{2})$ was used
in all aspects of the {\small LMTR} algorithm.
In terms of total number of iterations, {\small LMTR} with the dense initialization
outperformed {\small L-BFGS-TR}; however, {\small L-BFGS-TR} appears to have
outperformed {\small LMTR} with the dense initialization in computational time.

Figure~\ref{fig:exp6} (left) indicates that the quality of the trial
  points produced by solving the trust-region subproblem exactly using
  {\small LMTR} with the dense initialization is generally better than in
  the case of the line search applied to the \LBFGS{} direction.
 However, Figure~\ref{fig:exp6} (right) shows that {\small LMTR} with the dense
initialization requires more computational effort than {\small L-BFGS-TR}.
  For the
{\small CUTE}st set of test problems, {\small L-BFGS-TR} does not need to
perform a line search for the majority of iterations; 
that is, the full quasi-Newton trial step is accepted in a majority of the
iterations.  Therefore, we also compared the two algorithms on a subset of the
most difficult test problems--namely, those for which an \emph{active} line
search is needed to be performed by {\small L-BFGS-TR}.  To this end, we
select, as in~\cite{BurdakovLMTR16}, those of the {\small CUTE}st problems
in which the full \LBFGS{} (i.e., the step size of one)
was rejected in
at least 30\% of the iterations.  The number of problems in this subset
is 14.
The performance profiles associated with this reduced test set are in
Figure~\ref{fig:exp6-sel}.  On this smaller test set, {\small LMTR} outperforms
{\small L-BFGS-TR} both in terms of total number of iterations and computational time.

Finally, Figures~\ref{fig:exp6} and~\ref{fig:exp6-sel} suggest that when function
and gradient evaluations are expensive (e.g., simulation-based applications), {\small
LMTR} together with the dense initialization
is expected to be more efficient than {\small L-BFGS-TR}
since both on both test sets {\small LMTR} with
the dense initialization requires fewer overall iterations.  Moreover, Figure~\ref{fig:exp6-sel}
suggests that on problems where
the \LBFGS{} search direction 
often does not provide sufficient decrease of the objective function, {\small LMTR} with the 
dense initialization is expected to perform better.

 \begin{figure*}[h!]
                        \begin{minipage}{0.48\textwidth}
                                \includegraphics[width=\textwidth]{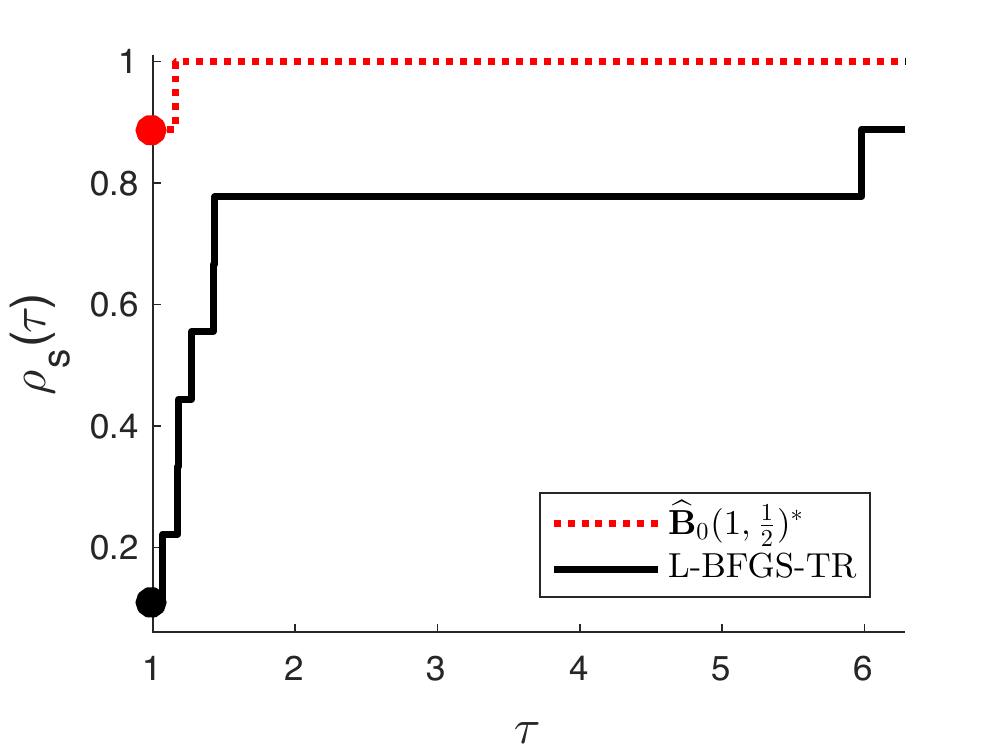}
                                
                        \end{minipage}
                        \hfill
                        \begin{minipage}{0.48\textwidth}
                                                \includegraphics[width=\textwidth]{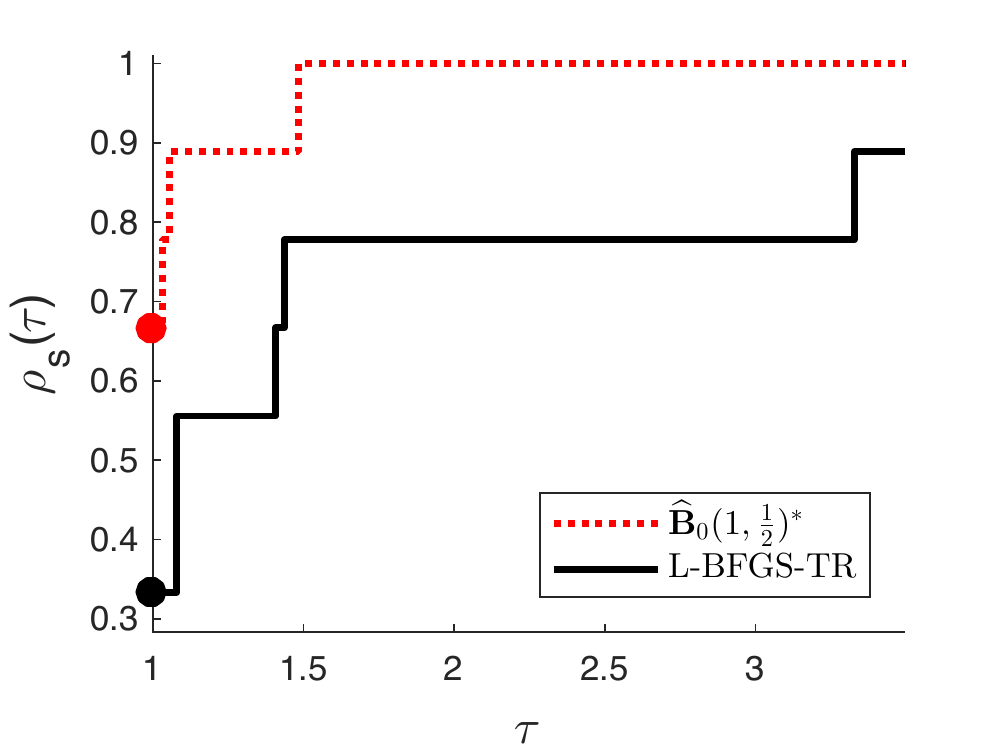}
                                
                        \end{minipage}
                        \caption{Performance profiles of 
                        \texttt{iter} (left) and
\texttt{time} (right) for Experiment 4
                        comparing  LMTR with the dense initialization
with 
$\gamma_k^{\perp}(1,\frac{1}{2})$
 to L-BFGS-TR
on the subset of 14 problems for which L-BFGS-TR implements a line search more than $30\% $  of the iterations.}
                \label{fig:exp6-sel}       
                \end{figure*}

\bigskip
\noindent
{\bf Experiment 5.}  In this experiment, we compare the results of {\small LMTR} using the 
dense initialization to that of {\small LMTR} using the conventional diagonal initialization $B_0=\gamma_k I$ where $\gamma_k$ is given by  (\ref{eqn-diagInit}).  The dense initialization selected
was chosen to be the top performer from Experiment 2 (i.e.,
$\gamma_k^{\perp}(1,\frac{1}{2})$).

  \begin{figure*}[h!]
                        \begin{minipage}{0.48\textwidth}
                                \includegraphics[width=\textwidth]{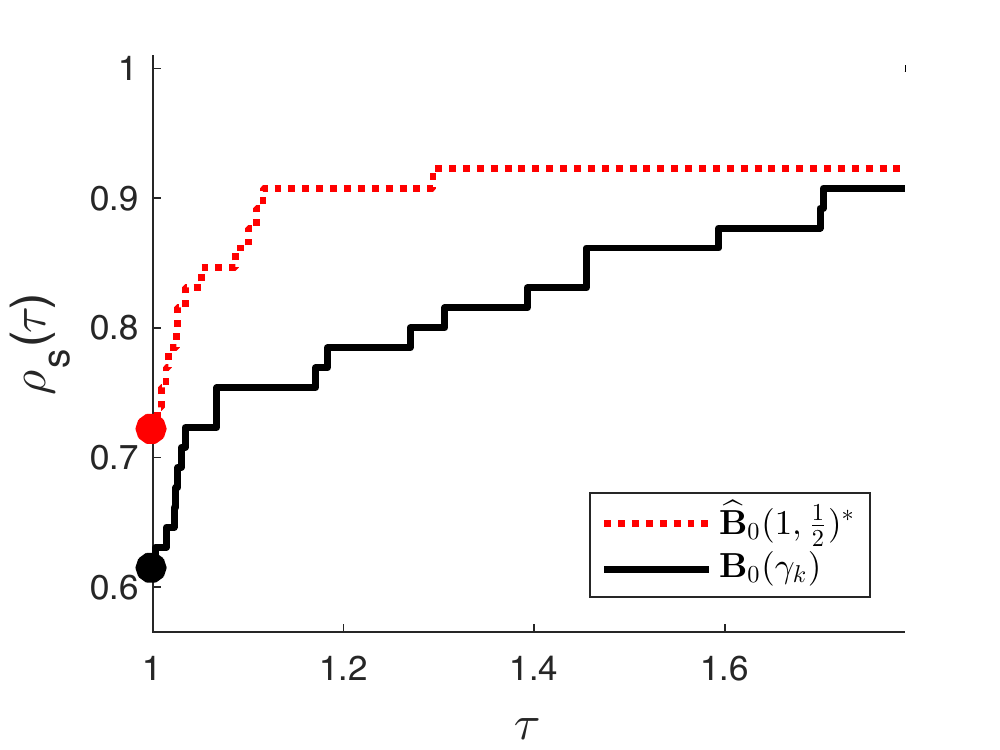}
                                
                        \end{minipage}
                        \hfill
                        \begin{minipage}{0.48\textwidth}
                                                \includegraphics[width=\textwidth]{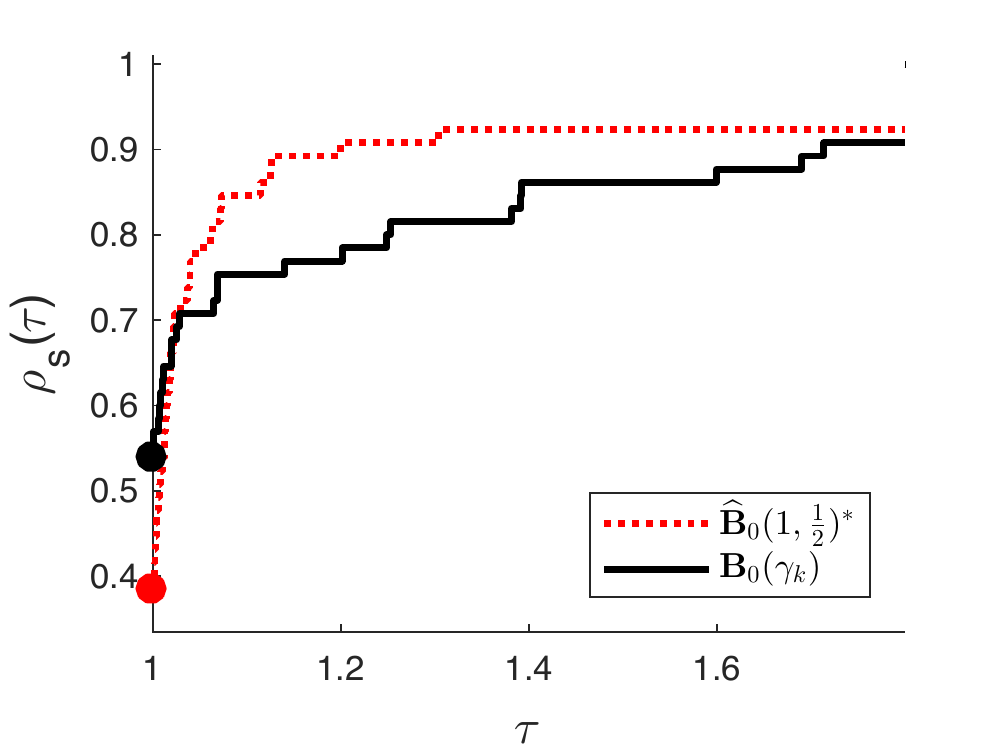}
                                                
                        \end{minipage}
                        \caption{Performance profiles of 
                        \texttt{iter} (left) and
\texttt{time} (right) for Experiment 5 comparing LMTR with the dense initialization
with
$\gamma_k^{\perp}(1,\frac{1}{2})$
 to LMTR with the conventional initialization.}
                \label{fig:compinitial}      
\end{figure*}

  \begin{figure*}[h!]
                        \begin{minipage}{0.48\textwidth}
                                \includegraphics[width=\textwidth]{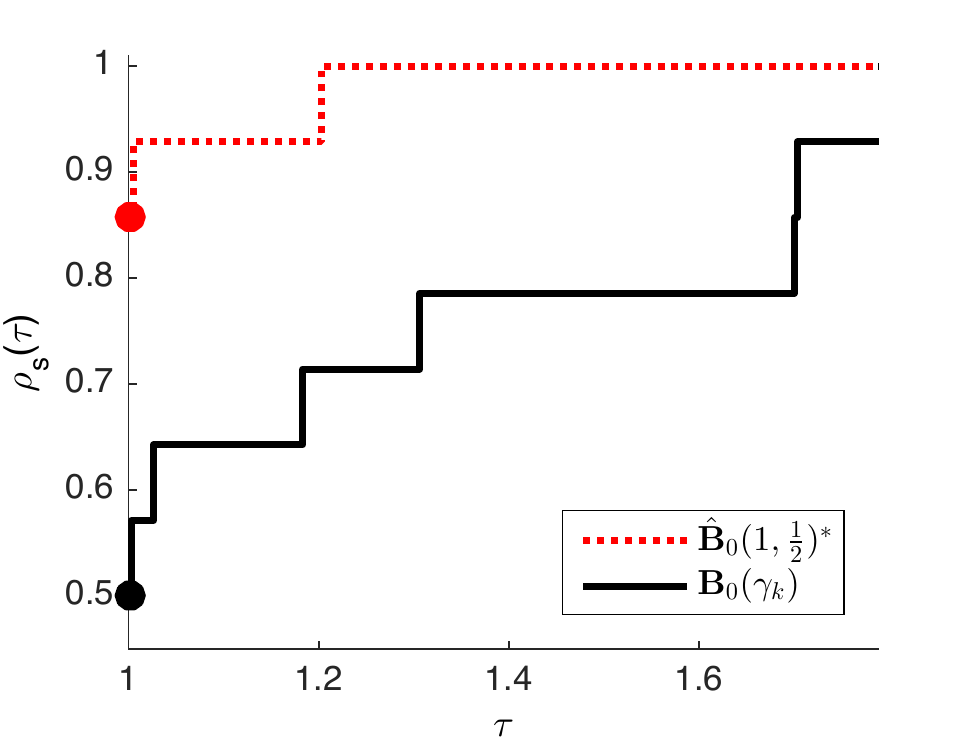}

                        \end{minipage}
                        \hfill
                        \begin{minipage}{0.48\textwidth}
                                                \includegraphics[width=\textwidth]{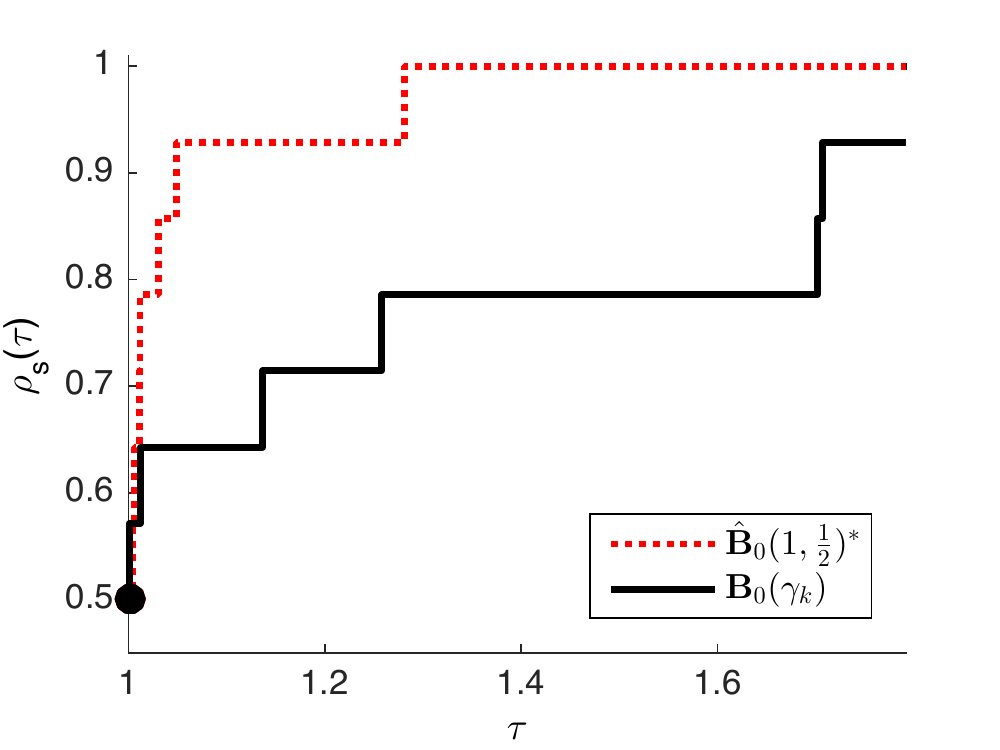}
                        \end{minipage}
                        \caption{Performance profiles of 
                        \texttt{iter} (left) and
\texttt{time} (right) for Experiment 5 comparing LMTR with the dense initialization
with
$\gamma_k^{\perp}(1,\frac{1}{2})$
 to LMTR with the conventional initialization
on the subset of 14 problems in which the unconstrained minimizer is rejected at  $30\% $  of the iterations.}
                \label{fig:compinitial_sel}       
\end{figure*}

From Figure~\ref{fig:compinitial}, the dense initialization with
$\gamma_k^{\perp}(1,\frac{1}{2})$ outperforms the conventional
initialization for
{\small LMTR} in terms of iteration count; however, it is unclear
  whether the algorithm benefits from the dense initialization in terms of
  computational time.  The reason for this is that the dense initialization
  is being used for all aspects of the {\small LMTR} algorithm; in
  particular, it is being used to compute the full quasi-Newton step
  $p_u^*$ (see the discussion in Experiment 1), which is typically accepted
  most iterations on the {\small CUTE}st test set.  Therefore, as in Experiment 5, we
  compared {\small LMTR} with the dense initialization and the conventional
  initialization on the subset of 14 problems in which the unconstrained
  minimizer is rejected at least 30\% of the iterations.  The performance
  profiles associated with this reduced set of problems are found in
  Figure~\ref{fig:compinitial_sel}.  The results from this experiment
  clearly indicate that on these more difficult problems the dense
  initialization outperforms the conventional initialization in both
  iteration count and computational time.

\section{Conclusion}
In this paper, we presented a dense initialization for quasi-Newton methods
to solve unconstrained optimization problems.  This initialization makes
use of two curvature estimates for the underlying function in two
complementary subspaces.  
Importantly, this initialization neither introduces additional
computational cost nor increases storage requirements; moreover,
it maintains theoretical convergence properties of quasi-Newton methods.
It should also be noted that this initialization still
makes it possible to efficiently compute products and perform solves with
the sequence of quasi-Newton matrices.  

The dense initialization is especially well-suited for use in the
shape-changing infinity-norm \LBFGS{} trust-region method. Numerical
results on the 
outperforms both the standard \LBFGS{} line search method as well as the
same shape-changing trust-region method with the conventional
initialization.  Use of this initialization is possible with any
quasi-Newton method for which the update has a compact representation.
While this initialization has broad applications for quasi-Newton line
search and trust-region methods, its use makes most sense from a
computational point of view when the quasi-Newton method already computes
the compact formulation and partial eigendecomposition; if this is not the
case, using the dense initialization will result in additional
computational expense that must be weighed against its benefits.


\bibliographystyle{abbrv}
\bibliography{myrefs}

\begin{thebibliography}{10}

\bibitem{BeckerLbfgs}
S.~Becker.
\newblock {LBFGSB} ({L-BFGS-B}) mex wrapper.
\newblock https://www.mathworks.com/matlab
  central/fileexchange/35104-lbfgsb--l-bfgs-b--mex-wrapper, 2012--2015.

\bibitem{BruEM16}
J.~Brust, O.~Burdakov, J.~B. Erway, R.~F. Marcia, and Y.-X. Yuan.
\newblock Shape-changing {L-SR1} trust-region methods.
\newblock Technical Report 2016-2, Wake Forest University, 2016.

\bibitem{BurdakovLMTR16}
O.~Burdakov, L.~Gong, Y.-X. Yuan, and S.~Zikrin.
\newblock On efficiently combining limited memory and trust-region techniques.
\newblock {\em Mathematical Programming Computation}, 9:101--134, 2016.

\bibitem{BurWX96}
J.~V. Burke, A.~Wiegmann, and L.~Xu.
\newblock Limited memory {BFGS} updating in a trust-region framework.
\newblock {T}echnical {R}eport, University of Washington, 1996.

\bibitem{ByrNS94}
R.~H. Byrd, J.~Nocedal, and R.~B. Schnabel.
\newblock Representations of quasi-{N}ewton matrices and their use in
  limited-memory methods.
\newblock {\em Math. Program.}, 63:129--156, 1994.

\bibitem{DeGuchyEM16}
O.~DeGuchy, J.~B. Erway, and R.~F. Marcia.
\newblock Compact representation of the full {B}royden class of quasi-{N}ewton
  updates.
\newblock {\em Numerical Linear Algebra with Applications}, 25(5):e2186, 2018.

\bibitem{DolanMore02}
E.~Dolan and J.~Mor\'{e}.
\newblock Benchmarking optimization software with performance profiles.
\newblock {\em Mathematical Programming}, 91:201--213, 2002.

\bibitem{ErwayM15}
J.~B. Erway and R.~F. Marcia.
\newblock On efficiently computing the eigenvalues of limited-memory
  quasi-{N}ewton matrices.
\newblock {\em SIAM Journal on Matrix Analysis and Applications},
  36(3):1338--1359, 2015.

\bibitem{ErwayMarcia17LAA}
J.~B. Erway and R.~F. Marcia.
\newblock On solving large-scale limited-memory quasi-{N}ewton equations.
\newblock {\em Linear Algebra and its Applications}, 515:196--225, 2017.

\bibitem{GouOT03}
N.~I.~M. Gould, D.~Orban, and P.~L. Toint.
\newblock {CUTEr and SifDec}: A constrained and unconstrained testing
  environment, revisited.
\newblock {\em ACM Trans. Math. Software}, 29(4):373--394, 2003.

\bibitem{Lu92}
X.~Lu.
\newblock {\em A study of the limited memory SR1 method in practice}.
\newblock PhD thesis, University of Colorado, 1992.

\bibitem{LukV13}
L.~Luk\v{s}an and J.~Vl\v{c}ek.
\newblock Recursive form of general limited memory variable metric methods.
\newblock {\em Kybernetika}, 49:224--235, 2013.

\bibitem{NocW99}
J.~Nocedal and S.~J. Wright.
\newblock {\em Numerical Optimization}.
\newblock Springer-Verlag, New York, 1999.

\bibitem{ShannoPh78}
D.~F. Shanno and K.~H. Phua.
\newblock Matrix conditioning and nonlinear optimization.
\newblock {\em Mathematical Programming}, 14(1):149--160, 1978.

\bibitem{ZhuByrdNocedal97}
C.~Zhu, R.~Byrd, and J.~Nocedal.
\newblock Algorithm 778: {L-BFGS-B}: {F}ortran subroutines for large-scale
  bound-constrained optimization.
\newblock {\em ACM Transactions on Mathematical Software}, 23:550--560, 1997.

\end{thebibliography}

\end{document}